\newtheorem{theorem}{Theorem}
\newtheorem{lemma}[theorem]{Lemma}
\newtheorem{proposition}[theorem]{Proposition}
\newtheorem{corollary}[theorem]{Corollary}
\theoremstyle{remark}
\newtheorem{remark}[theorem]{Remark}
\theoremstyle{definition}
\newtheorem{definition}[theorem]{Definition}
\numberwithin{equation}{section}
 \newcommand{\mc}{\mathcal}
 \newcommand{\E}{\mc{E}}
 \newcommand{\li}{\mc{L}}
 \newcommand{\q}{\mc{Q}}
 \newcommand{\M}{\mc{M}}
 \newcommand{\U}{\mc{U}}
 \newcommand{\Y}{\mc{Y}}
 \newcommand{\C}{\mathbb{C}}
 \newcommand{\R}{\mathbb{R}}
 \newcommand{\Kahler}{K\"{a}hler}
 \newcommand{\Rc}{\text{Rc}}
 \newcommand{\ab}{\alpha\bar{\beta}}
 \newcommand{\Czero}{\mathbb{C}^{2}\backslash\{0\}}
\newcommand{\Rm}{\text{\rm Rm}}
 \newcommand{\CP}{\mathbb{CP}}
 \newcommand{\al}{\alpha}
 \newcommand{\be}{\beta}
 \newcommand{\dalphabetabar}{\dfrac{\partial^2 }{\partial z^{\alpha} \partial \bar{z}^{\beta}}}
\begin{document}

\title[Blow-up of 4-d Ricci flow singularities]{On the blow-up of  four dimensional \\ Ricci flow singularities}

%    Information for first author
\author[Davi M{\'{a}}ximo]{Davi M{\'{a}}ximo}
%    Address of record for the research reported here
\address{Department of Mathematics, University of Texas at Austin, TX 78712, USA.}
\email{maximo@math.utexas.edu}
%    \thanks will become a 1st page footnote.

%    Information for second author

%    General info
\subjclass[2010]{53C44, 53C55, 53C21}

\date{\today}
\begin{abstract}
In this paper we prove a conjecture by Feldman-Ilmanen-Knopf in \cite{FIK} that the gradient shrinking soliton metric they cons\-tructed on the tautological line bundle over $\CP^1$ is the uniform limit of blow-ups of a type I Ricci flow singularity on a closed manifold. We use this result to show that limits of blow-ups of Ricci flow singularities on closed four dimensional manifolds do not necessarily have non-negative Ricci curvature. 
\end{abstract}
\maketitle

%--------------------------
\section{Introduction}
%--------------------------

Suppose $(M,g)$ is a closed Riemannian manifold.  One can evolve the me\-tric $g$ by
$\frac{\partial}{\partial t}g=-2\Rc(g),$
 a (weakly) parabolic system known as {\it Ricci flow}. Under certain curvature conditions on the initial $g$, it is possible to prove nor\-ma\-lized convergence to a round metric  (see Hamilton \cite{H1}, Brendle-Schoen \cite{BS}, B\"{o}hm-Wilking \cite{BW}) and thus say a lot about the to\-po\-lo\-gy of $M$. But for a large set of initial metrics the flow will become singular in finite time before converging to any smooth limiting metric.

 This happens when the curvature blows up on certain regions of the ma\-ni\-fold and, indeed, the standard short-time existence result for Ricci flow says that if the flow becomes singular at some finite time $T<\infty$ then $\lim_{t\nearrow T}\max_{x\in M} | \Rm(x, t)| = \infty$. Actually, one can prove that this happens if, and only if, $\limsup_{t\nearrow T}\max_{x\in M} | \Rc(x, t)| = \infty$, see \v{S}e\v{s}um \cite{S}.

 In order to analyze these singularities, one follows the conventional wisdom of singular analysis from non-linear PDEs and does a  {\it blow-up} at the sin\-gu\-la\-ri\-ty using the scaling symmetry of the equation. Depending on how much compactness is at hand,  one can extract a {\it singularity model} from a sequence of such blow-ups, which will usually have a better geometry than the original Ricci flow. Moreover, if one has a good knowledge about the possible singularity models, then one is able to understand the structure of the singularity formation and see how to perform surgery while controlling the geometry and the topology of the manifold, thus arriving at the so-called {\it Ricci flow with surgeries}.

The above blow-up analysis in three dimensional Ricci flow has been proved quite successful  by the work of Hamilton ($e.g.$~see \cite{H2})  and Perelman (\cite{P1}, \cite{P2}) on the Poincar\'{e} and Geometrization conjectures,  and a lot of their theory carries on to higher dimensional settings, which is an area of active research.

Indeed, for an $n$-dimensional Ricci flow $g(t)$ on a maximal time interval $[0,T)$ with $T<\infty$, it follows from Hamilton-Perelman's theory that one can choose a sequence of points $p_i\in M$ and  times $t_i\nearrow T$ with
$$\lambda_i=|\Rm|(p_i,t_i)= \sup_{x\in M,\, t\leq t_i} |\Rm|(x,t)\longrightarrow \infty$$
such that the rescaled flows
$$g_i(t)=\lambda_i g\left(t_i + \dfrac{t}{\lambda_i}\right)$$
will converge (in a suitable sense and up to subsequence) to a complete Ricci flow $(N,g_\infty(t))$, which one calls  {\it singularity model}. Moreover, if the singularity is of {\it type I}, i.e., the curvature blows up like
$$\limsup_{t\nearrow T}\max_M |\Rm|(T-t)<\infty,$$
by the work of Enders-M\"{u}ller-Topping in \cite{EMT} the above limit $(N,g_\infty(t))$ will be a non-flat  {\it gra\-dient shrinking soliton}, that is a self-similar Ricci flow, where the metric  $g_\infty$ evolves only by scaling and diffeomorphism. This means there will exist a time dependent function $f$ defined on $N$ such that one can obtain the Ricci flow $g_\infty(t)$ from pullbacks of an initial metric $g_\infty(0)$ by diffeomorphisms $\phi_t$ of $N$ generated by $\frac{1}{T-t}\nabla f$, that is,
$$g_\infty(t)= (T-t)\phi_t^{\ast}g_\infty(0),$$
and one can check this is the case whenever the soliton equation
\begin{equation}\label{eqsoliton}
\Rc({g_\infty}) +\nabla\nabla f=\dfrac{1}{2}g_\infty
\end{equation}
is satisfied at some point in time (see Cao \cite{Cao2} for a survey on metrics satisfying the above equation).

In dimension three, the {\it Hamilton-Ivey pinching} estimate (see \cite{H2}, \cite{I}) roughly states that if the flow has a region with very negative sectional curvature, then the most positive sectional curvature is much larger still. This implies that limits of blow-ups of three dimensional Ricci flows will have non-negative sectional curvature and thus drastically constrains the singularities that can appear, making three dimensional Ricci flow with surgeries plausible. In higher dimensions, one has that such limits will have non-negative scalar curvature by the work of Chen \cite{Ch1}, but this type of estimate is lacking for more useful curvature conditions.

In this paper we prove that certain such estimates {\it cannot} exist for Ricci curvature in dimension four, i.e., that limits of blow-ups of four-dimensional Ricci flows do not necessarily have non-negative Ricci curvature. We achieve this by solving a question left by Feldman-Ilmanen-Knopf in \cite{FIK},  which we state soon.

Let $M$ be $\CP^2$ blown-up at one point and $L$ be $\C^2$ blown-up at zero. We invite the reader not familiar with these spaces to see Appendix \ref{ap6.1} before reading what follows.

We will think of $M$ as $\Czero$ with one $\CP^{1}$  glued at $0$ (the section $\Sigma_0$) and another at $\infty$ (the section $\Sigma_\infty$) and of $L$ as $\Czero$ with a $\CP^{1}$  glued only at $0$. Both $M$ and $L$ are line bundles over $\CP^1$, $M$ with line $\CP^1$ and $L$ with line $\C$. One can consider {\it \Kahler~metrics} on these manifolds and evolve them by Ricci flow.

Indeed, let $g(t)$ be a one-parameter family of Riemannian metrics on the manifold $M$ evolving by Ricci flow\footnote{In the context of \Kahler~geometry, Ricci flow appears in the literature as $\partial_t g=-\Rc(g)$ instead of the usual $\partial_t g =-2\Rc(g)$. This only changes things by scaling.}
\begin{equation}\label{eqKRF}
\dfrac{\partial}{\partial t} g = -\Rc (g).
\end{equation}
Assume further that $g(0)$ is \Kahler. It is a noted fact that $g(t)$ will remain \Kahler~ with regard to the same complex structure and the flow is thus called \Kahler-Ricci flow, see $e.g.$~Cao \cite{Cao11}. The \Kahler~ class $[\omega(t)]$ of the metric $g(t)$ will evolve by
\begin{equation}
\partial_t[\omega(t)]=-[\Rc(\omega)]=-c_1(M),
\end{equation}
\noindent where $c_1(M)$ is the first Chern class of the complex surface $M$. In particular,
\begin{equation}\label{eq:KRF0}
[\omega(t)]=[\omega(0)]- t c_1(M).
\end{equation}
\indent Moreover, on $M$, the cohomology classes of the divisors $[\Sigma_0]$ and $[\Sigma_\infty]$  span $H^{1,1}(M;\R)$ so any \Kahler~ class $[\omega]$ can be written uniquely as
$$[\omega]=b[\Sigma_\infty]-a[\Sigma_0]$$
\noindent for constants $0<a<b$,  and the first Chern class satisfies $ c_1(M)=[\Sigma_0]+3[\Sigma_\infty]$. This and equation (\ref{eq:KRF0}) give
\begin{equation}\label{kahlerclass}
[\omega(t)]=b(t) [\Sigma_\infty] - a(t)[\Sigma_0]
\end{equation}
\noindent for $a(t)=a(0)-t$ and $b(t)=b(0)-3t$. Thus, if initially $b(0)>3a(0)$, then $a(t)\rightarrow 0$ as $t\nearrow T=a(0)$ and the class $[\omega(T)]$ will not be \Kahler.  This will mean that the $\CP^1$ of the section $[\Sigma_0]$ has collapsed to a point and thus Ricci flow must have become singular at a time no later than $t=T$.

In \cite{FIK}, Feldman-Ilmanen-Knopf conjectured that indeed, at least for $U(2)$-invariant metrics, if $b(0)>3a(0)$ then a type I singu\-la\-rity  will develop along $\Sigma_0$ precisely at time $t=T$ and the blow-up limit of such singularity is the gradient shrinking soliton they have constructed on $L$, the {\it FIK soliton}.

Since their work,  a lot of investigation has been done on \Kahler-Ricci flow of general \Kahler~manifolds. Of relevance to this paper are Tian-Zhang \cite{TZ}, Song-Weinkove \cite{SW}, and the more recent Song \cite{So} .  When restricted to the  \Kahler-Ricci flow of $U(2)$-invariant metrics of $M$ as above, \cite{TZ} gives the singular time to be exactly $T=a(0)$, and in \cite{SW} the authors prove, among other things,  that the singularity at $t=T$ will develop only along $\Sigma_0$, with $g(T)$ being a smooth metric on $M\backslash\Sigma_0$. Finally, Song \cite{So} proved that such a singularity is type I, and using the compactness at hand, he argued that limits of blow ups of the flow will subconverge in the Cheeger-Gromov-Hamilton sense to a complete non-flat gradient shrinking \Kahler-Ricci soliton on a manifold homeomorphic to $\C^2$ blown up at one point. Moreover, the isometry group of this soliton contains the unitary group $U(n)$; but since the complex structure might jump in the limit, one is not able to argue using Cheeger-Gromov-Hamilton convergence that this soliton is in fact the FIK soliton.

  In this paper we complete the proof of the Feldman-Ilmanen-Knopf  conjecture for a large set of initial metrics:

%-------------------------
%Main Theorem
%-------------------------

 \begin{theorem}\label{thm:main}
 Let $g(t)$ be metrics on $M$ evolving by \Kahler-Ricci flow \eqref{eqKRF}. For a large open set\,\footnote{See Definition \ref{defc} in Section 3 for a precise statement on the set of initial metrics.} of $U(2)$-invariant initial me\-trics $g(0)$ belonging to the \Kahler~ class $b(0)[\Sigma_\infty]-a(0)[\Sigma_0]$ with $b(0)>3a(0)$, one has the following:
 \begin{itemize}
 \item[(i)] the flow is smooth until it becomes singular at time $T=a(0)$,
 \item[(ii)] at t=T the flow develops a type I singularity in the region $\Sigma_0$ and $g(T)$ is a smooth Riemannian metric on $M\backslash\Sigma_0$,
 \item[(iii)] parabolic dilations of $g(t)$ converge uniformly  on any parabolic neighborhood\,\footnote{This notion is made precise in Section 3 too. See also Remark \ref{remark3}.}  of the singular set $\Sigma_0$ to the evolution of the FIK soliton.
 \end{itemize}
\end{theorem}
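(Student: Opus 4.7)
The plan is to exploit $U(2)$-invariance to reduce the PDE to a one-dimensional problem, use the results of Tian-Zhang, Song-Weinkove and Song to dispatch parts (i) and (ii) quickly, and then attack the crux (iii) by a barrier/maximum-principle argument against the explicit FIK soliton evolution. First, I would parametrize the $U(2)$-invariant \Kahler~metrics on $M$ by a single profile function $\phi(s,t)$ of the radial coordinate $s=\log|z|^2$ on the $\mathbb{C}^\ast$ part of $M$ (the standard Calabi/momentum-map ansatz). In these coordinates the \Kahler-Ricci flow (\ref{eqKRF}) becomes a scalar parabolic equation in $(s,t)$ on an interval whose endpoints correspond to $\Sigma_0$ and $\Sigma_\infty$, with prescribed boundary behavior dictated by the smoothness of the metric at each $\CP^1$. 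The class condition (\ref{kahlerclass}) with $b(0)>3a(0)$ translates into monotonicity/positivity constraints on $\phi_s$ that are preserved by the flow. Item (i) is then immediate from \cite{TZ}, and item (ii) from \cite{SW}, once the radial setup is in place.

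For (iii), I would first write the FIK soliton on $L$ in the same momentum variable: it is a self-similar solution of the one-dimensional equation on the half-line corresponding to $L$, with an explicit profile $\phi^{\mathrm{FIK}}$ solving an ODE, and its evolution $g^{\mathrm{FIK}}(t)$ reads as $(T-t)\phi_t^\ast g^{\mathrm{FIK}}(0)$ with $\phi_t$ generated by $\nabla f/(T-t)$. Introducing the parabolic rescaling $\tilde g(t)=(T-t)^{-1}g(t)$ and the natural change of time $\tau=-\log(T-t)$, the FIK evolution becomes a stationary profile $\phi^{\mathrm{FIK}}$ on the half-line. The goal is to show that the rescaled solution $\tilde\phi(s,\tau)$ converges uniformly to $\phi^{\mathrm{FIK}}$ on any compact set in the $(s,\tau)$ half-strip as $\tau\to\infty$.

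I would carry this out by constructing explicit super- and sub-solutions of the rescaled one-dimensional equation obtained by small perturbations of $\phi^{\mathrm{FIK}}$ together with the Song limit soliton. The \emph{open set} of initial data in Definition 3 (referenced in the theorem) should be precisely the class of $U(2)$-invariant profiles $\phi(\cdot,0)$ that are sandwiched between these barriers on the $\Sigma_0$-side, in a quantitative sense respecting the boundary asymptotics at $\Sigma_0$ and sufficient $C^0$ closeness of the Kähler class ratio $b(0)/a(0)$ to the soliton model. The maximum principle applied in the rescaled variables, combined with the fact that the barriers decay to $\phi^{\mathrm{FIK}}$ as $\tau\to\infty$, yields the uniform $C^0$ convergence of $\tilde\phi$ to $\phi^{\mathrm{FIK}}$ on parabolic neighborhoods of $\Sigma_0$; standard parabolic regularity then upgrades this to smooth convergence of the metrics. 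Combining this with Song's Cheeger-Gromov-Hamilton subconvergence to \emph{some} gradient shrinking soliton identifies that soliton with the FIK one, since we now have a preferred uniform limit on each parabolic neighborhood.

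The main obstacle is the construction of barriers on the $U(2)$-invariant radial equation that (a) are globally defined on the relevant interval, (b) respect the singular/asymptotic behavior at $\Sigma_0$ where the $\CP^1$ is collapsing and at $\Sigma_\infty$ where the geometry stays smooth, and (c) are preserved under the flow long enough for the rescaled equation to contract them onto $\phi^{\mathrm{FIK}}$. Handling the non-compact end that emerges after blow-up at $\Sigma_0$ (producing $L\cong\mathbb{C}^2$ blown up at a point) requires a careful matching between the $M$-level barriers and the asymptotic cone of $\phi^{\mathrm{FIK}}$ at infinity, since the end of $L$ opens up precisely as $\Sigma_\infty$ recedes to infinity under parabolic rescaling. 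I expect most of the technical work to lie in this matching and in isolating the largest natural open set of initial $\phi(\cdot,0)$ for which the barrier mechanism closes up.
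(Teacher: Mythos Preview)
Your outline for (i) and (ii) matches the paper. For (iii), however, there is a genuine gap: your claim that after the parabolic rescaling $\tau=-\log(T-t)$ the FIK evolution becomes a \emph{stationary} profile is false. In the rescaled potential $\phi(\rho,\tau)=e^\tau\varphi(r,t)$ with $\rho=r+\tau$, the FIK soliton is still moving by diffeomorphisms --- it translates in $\rho$ at speed $\sqrt{2}-1$. There is moreover a one-parameter family of FIK potentials (translates of one another), so there is no distinguished target to sandwich toward. A barrier argument run directly in $(\rho,\tau)$ against a single ``$\phi^{\mathrm{FIK}}$'' therefore cannot close: any fixed perturbation of a fixed FIK profile drifts away from the actual solution, and you have no mechanism to select which translate the flow is approaching.

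The paper resolves this by a further change of variables that you are missing: since $\phi_\rho>0$, one inverts and writes $y(\phi,\tau)=\phi_\rho$ with $\phi$ as the spatial coordinate. This kills the translation freedom (all FIK translates collapse to a single explicit function $\mathcal{Y}(\phi)$, which is genuinely stationary for the $y$-evolution), and the resulting equation \eqref{evol} has only quadratic nonlinearity, so barriers of the form $\mathcal{Y}(\phi)\pm\lambda(\tau)\phi^2$ work. The class $\mathcal{C}$ is defined by a one-sided condition $y(\phi,0)>\mathcal{Y}(\phi)-\tfrac{1}{5}\phi^2$ (no closeness of $b(0)/a(0)$ is needed; the upper barrier is arranged by choosing its coefficient large). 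After the $y$-convergence, one must still recover the drift $C(\tau)\sim(\sqrt{2}-1)\tau$ by a separate argument (Section~4), using the scalar curvature asymptotics at $\Sigma_0$; this is what pins down \emph{which} FIK translate is the limit and upgrades ``convergence modulo diffeomorphisms'' to convergence to the actual FIK evolution. Your proposal does not contain either of these ingredients.
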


As we have pointed out, part (i) follows from a now standard general result of \cite{TZ} and part (ii) is proved by putting together the results in \cite{SW} and \cite{So}. Our proof of part (iii) is  based on comparison principle techniques applied to the evolving metric potentials, and thus gives  a \Kahler~limit with respect to the same original complex structure. We first prove convergence to  the FIK metric in $C^{0,1}$ topology without making any type I blow up assumptions. To prove higher regularity without making further restrictions on the class of initial data, we then use the type I blow up for the scalar curvature proved in \cite{So}.

We remark that comparison principle techniques have also been used on a large body of work on yet another type of Ricci flow singularity, {\it neckpinches}, see Angenent-Knopf \cite{AK1}, \cite{AK2},  Angenent-Caputo-Knopf \cite{ACK}, Angenent-Isenberg-Knopf \cite{AIK}, and Gu-Zhu \cite{GZ}.
 
For the reader interested  in more results concerning singularity analysis of the \Kahler-Ricci flow we suggest to look also at Song-Weikove \cite{SW2}, \cite{SW3}, Song-Tian \cite{ST}, Fong \cite{Fo}, and the references therein.  In the latter article, Fong studies singularity formation for the case $b(0)<3a(0)$. For general \Kahler-Ricci flow lecture notes see Song-Weinkove \cite{SW4}.

Theorem \ref{thm:main} has the following two consequences. Since the FIK soliton has Ricci curvature of mixed sign near $\Sigma_0$:

\begin{theorem}\label{thm:2}
Limits of blow-ups of Ricci flow singularities on closed four dimensional manifolds do not necessarily have non-negative Ricci curvature. 
\end{theorem}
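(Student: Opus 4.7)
The plan is to deduce Theorem \ref{thm:2} directly from Theorem \ref{thm:main} by verifying that the FIK shrinking soliton on $L$ has Ricci curvature of mixed sign. Granting this, Theorem \ref{thm:main}(iii) produces, for any initial datum $g(0)$ from the open set in the statement, a sequence of parabolic dilations of the closed four-dimensional Ricci flow on $M$ which converges, uniformly on parabolic neighborhoods of $\Sigma_0$, to a complete limit with Ricci curvature negative at some point. Since this limit is by construction a blow-up limit of a four-dimensional Ricci flow singularity on a closed manifold, it provides the required counterexample.

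The key step is thus to exhibit a point of $L$ at which the FIK soliton has a negative Ricci eigenvalue. I would do this by direct computation. The FIK metric is $U(2)$-invariant, so its \Kahler~form is built from a single radial \Kahler~potential, and the Ricci eigenvalues split into two groups: one along the $\CP^1$ directions and one transverse to them, each expressible in closed form in terms of the momentum profile introduced in \cite{FIK}. Substituting the explicit FIK profile, one checks that one of these eigenvalues becomes negative on a nontrivial open region. An equivalent viewpoint uses the shrinker identity
\[
\Rc(g_\infty) = \tfrac12 g_\infty - \nabla\nabla f,
\]
together with the essentially quadratic growth of the potential $f$ away from the zero section: this forces $\nabla\nabla f$ to dominate $\tfrac12 g_\infty$ in some direction, producing a negative Ricci eigenvalue. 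This sign computation is essentially already recorded in the analysis of the FIK metric in \cite{FIK}.

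The main obstacle is technical, namely ensuring that the convergence in Theorem \ref{thm:main}(iii) is strong enough to transfer curvature information from the limit back to the rescaled flows. A mere $C^{0,1}$ limit does not suffice to pass to the Ricci tensor; however, the higher-regularity strengthening discussed after Theorem \ref{thm:main} (which invokes the type I bound for the scalar curvature from \cite{So}) provides smooth convergence on compact subsets of $L\setminus\Sigma_0$, and the open region where the FIK soliton has negative Ricci eigenvalue can be chosen to lie in such a compact set. Hence for sufficiently large index the rescaled metrics also admit a point with a negative Ricci eigenvalue, completing the proof.
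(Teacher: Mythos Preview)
Your approach is the paper's: Theorem~\ref{thm:main} identifies the blow-up limit as the FIK soliton, and the paper has already recorded (Section~2.4) that this soliton has a negative Ricci eigenvalue, so Theorem~\ref{thm:2} follows immediately.

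Two small corrections are worth making. First, the negative Ricci eigenvalue of the FIK metric appears \emph{near} $\Sigma_0$, not far from it: the paper computes $\lambda_2 = 1 - \sqrt{2}\,\frac{\varphi_{rr}}{\varphi_r}$ and uses $\frac{\varphi_{rr}}{\varphi_r}\to 1$ as $r\to -\infty$ to get $\lambda_2\to 1-\sqrt{2}<0$ along $\Sigma_0$. Your alternative heuristic---quadratic growth of $f$ away from the zero section forcing $\nabla\nabla f$ to dominate $\tfrac12 g_\infty$---points in the wrong direction: the FIK metric is asymptotically conical, hence asymptotically Ricci-flat, and there $\nabla\nabla f\to \tfrac12 g_\infty$ rather than exceeding it. Second, the convergence in Theorem~\ref{thm:main}(iii) is smooth on parabolic neighborhoods that \emph{include} $\Sigma_0$, and since Theorem~\ref{thm:2} concerns only the limit itself, no ``transfer back to the rescaled flows'' is required: once the limit is identified as FIK and FIK has mixed Ricci sign, the proof is complete.
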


Moreover, after constructing a metric  on $M$ with strictly positive Ricci curvature that satisfies the conditions of Theorem \ref{thm:main}, we will have:

\begin{corollary}\label{cor}
Positive Ricci curvature is not preserved by Ricci flow in four dimensions or higher.
\end{corollary}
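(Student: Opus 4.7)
The plan is to derive Corollary \ref{cor} directly from Theorem \ref{thm:main} together with the mixed-sign Ricci property of the FIK soliton used in Theorem \ref{thm:2}. Concretely, I would exhibit an initial $U(2)$-invariant K\"ahler metric $g(0)$ on $M$ which (a) satisfies the hypotheses of Theorem \ref{thm:main} and (b) has strictly positive Ricci curvature. Given such a $g(0)$, Theorem \ref{thm:main} produces a type I singularity at $T=a(0)$ whose parabolic rescalings converge uniformly on every parabolic neighborhood of $\Sigma_0$ to the FIK soliton; since that soliton has eigenvalues of mixed sign in $\Rc$ near $\Sigma_0$, and since $\Rc$ is scale-invariant as a $(0,2)$-tensor, the rescaled metrics must themselves acquire points of negative Ricci eigenvalue, forcing $\Rc(g(t))$ to fail to be positive at some $t\in(0,T)$. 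For higher dimensions $n>4$, I would take the Riemannian product of $(M,g(t))$ with a round sphere $\Sp^{n-4}$ of sufficiently small radius: product Ricci flow decouples into its two factors, positivity of Ricci at $t=0$ holds since it holds in both factors, and the loss of positivity in the $M$-factor transfers directly to the product.

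To construct $g(0)$, I would work inside the Calabi/FIK ansatz for $U(2)$-invariant K\"ahler metrics on $M$ used throughout the paper: each such metric is parametrized by a single profile function on a bounded interval, the two endpoints corresponding to the sections $\Sigma_0$ and $\Sigma_\infty$, and the moduli $a,b$ of the K\"ahler class $[\omega]=b[\Sigma_\infty]-a[\Sigma_0]$ being encoded by the boundary data of the profile. In these coordinates the Ricci form becomes an explicit expression in the first and second derivatives of the profile, so pointwise positivity of $\Rc$ reduces to a one-variable inequality along the interval. The strategy is to begin from a profile whose associated metric has strictly positive Ricci---e.g.\ one modeled on a Fubini-Study-like metric on the Hirzebruch surface $M$, using that the class $c_1(M)=[\Sigma_0]+3[\Sigma_\infty]$ is positive---and then deform it to move the K\"ahler class into the regime $b(0)>3a(0)$ while remaining inside the open neighborhood of initial data prescribed by Definition \ref{defc}. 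Because strict positivity of $\Rc$ and the admissibility condition of Definition \ref{defc} are both $C^2$-open conditions on $g(0)$, it suffices to produce a single admissible profile with $\Rc\geq \epsilon g$ and perturb.

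The main obstacle will be verifying compatibility of the K\"ahler-class constraint $b(0)>3a(0)$, which forces $\Sigma_\infty$ to be much longer than $\Sigma_0$, with pointwise positivity of Ricci curvature along the entire profile. In ODE terms this amounts to producing a profile satisfying the $U(2)$-invariant closing conditions at both endpoints whose Ricci expression is positive throughout; concretely I would try a profile that is approximately affine with large slope on the bulk of the interval and suitably smoothed near the endpoints, and then verify the Ricci positivity inequality by direct computation. Once such a profile is in hand, the resulting $g(0)$ meets all three conditions simultaneously, and Theorem \ref{thm:main} together with the mixed-sign property of the FIK soliton yields Corollary \ref{cor}.
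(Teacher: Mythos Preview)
Your overall architecture matches the paper exactly: exhibit a $U(2)$-invariant metric on $M$ with strictly positive Ricci curvature that lies in the class $\mc{C}$, then invoke Theorem~\ref{thm:main} and the mixed-sign Ricci of the FIK limit. The difference lies in how the initial metric is produced. The paper does not build an ad hoc profile; it takes the Cao--Koiso shrinking soliton $\varphi^{\mathrm{KC}}$ on $M$, which already has strictly positive Ricci and sits on the borderline $b_0=3a_0$, checks explicitly that it satisfies the barrier inequality of Definition~\ref{defc}, and then perturbs it \emph{only near $r=+\infty$} to push $b_0$ slightly above $3a_0$. Because the perturbation is localized at $\Sigma_\infty$, the eigenvalue expansion \eqref{eigen2} controls Ricci there and positivity elsewhere is untouched. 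This sidesteps entirely the ``main obstacle'' you identify, so your proposed affine-profile construction, while plausible, is more work than necessary.

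One small caution on your higher-dimensional extension: the product $M\times\Sp^{n-4}$ has strictly positive Ricci at $t=0$ only when $n-4\geq 2$, since $\Sp^1$ is flat; and you want the sphere radius \emph{large} (not small) so that its own collapse time exceeds the time at which the $M$-factor acquires a negative Ricci eigenvalue. The paper itself does not spell out the $n>4$ case, so your product argument is a reasonable addition once these two points are adjusted.
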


As mentioned before, the above results shows a contrast  between Ricci flow in dimensions three and four. Moreover, Corollary \ref{cor} is related to a previous result of the author \cite{M}, and also a result on lower bounds for Ricci curvature under the \Kahler-Ricci flow by Zhang \cite{Zh}, which provides other examples that imply the same result stated in the corollary.
\\\\
\noindent{\bf Acknowledgments.} This work is part of my PhD thesis at the University of Texas at Austin. It could not have been done without the support and mentorship of my advisor Dan Knopf, to whom I am deeply grateful. I also wish to warmly thank Jian Song and Ben Weinkove for hepful  comments on an earlier version of this work, and to acknowledge the NSF for its support.

%----------------------------------------

\section{$U(2)$-invariant \Kahler~metrics}

%----------------------------------------

In this section we consider rotationally symmetric \Kahler~ metrics on $\Czero$ to derive \Kahler~metrics on any given \Kahler~ class  of the complex surface $M$, following an ansatz introduced by Calabi \cite[Section 3]{C}.\\
\indent Let $g$ be a $U(2)$-invariant \Kahler~ metric on $\Czero$, the latter with complex coordinates $z=(z^1,z^2)$. Define $u=|z^1|^2$, $v=|z^2|^2$, and $w=u+v$.\\
\indent Since $g$ is a \Kahler~ metric and the second de Rham cohomology group $H^2(\Czero)=0$, by the $\partial\bar{\partial}$-lemma one can find a global real smooth function $P:\Czero\longrightarrow\R$ such that
\begin{equation}\label{eq: kahler potential}
g_{\ab}=\dalphabetabar P.\,
\end{equation}
\indent The further assumption of $g$ being rotationally symmetric allows us to write $P=P(r)$, where $r=\log w$.\footnote{Depending on the purpose of the computation, one coordinate might be preferable than the other and we will use both $r$ and $w$ in the rest of the paper.} We then set $\varphi(r)=P_r(r)$ (we use a subscript for the derivative since later $P$ will be regarded as a function of time as well) and compute from (\ref{eq: kahler potential})
\begin{equation}\label{eq:g}
g=[e^{-r}\varphi\delta_{\alpha\beta} + e^{-2r}(\varphi_r-\varphi)\bar{z}^{\al}z^{\beta}]dz^{\al}d\bar{z}^{\beta},
\end{equation}
\noindent and:\footnote{The matrix $(g)$ is actually a $4\times 4$ matrix: $(g)=\left(
\begin{matrix}
A & 0\\
0& A\\
\end{matrix}
\right)$, where $A=
\left(\begin{matrix}
g_{1\bar{1}} & g_{1\bar{2}}\\
g_{2\bar{1}} & g_{2\bar{2}}\\
\end{matrix}
\right)$.}
\begin{equation*}
\left(
\begin{matrix}
g_{1\bar{1}} & g_{1\bar{2}}\\
g_{2\bar{1}} & g_{2\bar{2}}\\
\end{matrix}
\right)=\dfrac{1}{w^2}\left(
\begin{matrix}
v\varphi+u\varphi_r & (\varphi_r-\varphi)\bar{z}^{1}z^{2}\\
(\varphi_r-\varphi)z^{1}\bar{z}^{2} & u\varphi+v\varphi_r\\
\end{matrix}
\right).
\end{equation*}
\indent Because $\det (g_{\ab})= e^{-2r}\varphi\varphi_r$, one  can quickly note that a potential $P$ on $\Czero$ gives rise to a \Kahler~ metric as in (\ref{eq: kahler potential}) if, and only if,
\begin{equation}\label{eq:Kahler condition}
\varphi>0\,\, \text{and}\,\, \varphi_r>0 .
\end{equation}
\indent Given a metric as above, Calabi's Lemma \cite[Section 3]{C} tells us that $g$ will extend to a smooth \Kahler~ metric on the complex surface $M$ if $\varphi$ satisfies the following asymptotic properties $-$ henceforth called {\it Calabi's conditions}:
\begin{enumerate}
\item[(i)] There exists positive constants $a_0$ and $a_1$ such that $\varphi$ has the expansion
 \begin{equation}\label{calabi:condition0}
\varphi(r)=a_0+a_1w+a_2w^2+{O}(|w|^3)
\end{equation}
\noindent as $r\rightarrow -\infty$;
\item[(ii)] There exists a positive constant $b_0$ and a negative constant $b_1$ such that $\varphi$ has the expansion
\begin{equation}\label{calabi:condition1}
\varphi(r)=b_0+b_1w^{-1}+b_2w^{-2}+{O}(|w|^{-3})
\end{equation}
\noindent as $r\rightarrow \infty$.
\end{enumerate}
\begin{remark}
We note that $\varphi_r>0$ for $r$ finite, but $\varphi_r=0$ for $r=\pm\infty$.
\end{remark}

\indent Summarizing the above:

\begin{lemma}[Calabi, \cite{C}]
Any potential $\varphi$ satisfying conditions $(\ref{eq:Kahler condition})$, $(\ref{calabi:condition0})$ and $(\ref{calabi:condition1})$ will give rise to a $U(2)$-invariant \Kahler~metric on $M$.  Moreover, this me\-tric will belong in \Kahler~class $b_0[\Sigma_\infty]-a_0[\Sigma_0]$, and satisfy $|\Sigma_0|=\pi a_0$ and $|\Sigma_\infty|=\pi b_0$.
\end{lemma}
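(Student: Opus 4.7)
The statement bundles three claims: smoothness of $g$ at the zero section $\Sigma_0$, smoothness at the infinity section $\Sigma_\infty$, and identification of the \Kahler~class together with the two areas. For the extension across $\Sigma_0$, I would introduce the standard blow-up chart $(\xi^1,\xi^2)$ on $M$ given by $z^1=\xi^1$, $z^2=\xi^1\xi^2$, so that $\Sigma_0=\{\xi^1=0\}$ and $w=|\xi^1|^2(1+|\xi^2|^2)$. Integrating \eqref{calabi:condition0} in $r$ yields
$$P(r)=a_0\,r+\tilde P(w)+\mathrm{const},\qquad \tilde P(w)=a_1 w+\tfrac{a_2}{2}w^2+O(w^3),$$
with $\tilde P$ smooth at $w=0$. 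Since $r=\log|\xi^1|^2+\log(1+|\xi^2|^2)$ and $\I\,\partial\bar\partial\log|\xi^1|^2=0$ on $\{\xi^1\neq 0\}$, on the punctured chart
$$\omega=\I\,\partial\bar\partial P=\I\,\partial\bar\partial\Big[a_0\log(1+|\xi^2|^2)+\tilde P\bigl(|\xi^1|^2(1+|\xi^2|^2)\bigr)\Big],$$
whose right-hand side is manifestly smooth across $\xi^1=0$, so $\omega$ extends. Positivity at $\Sigma_0$ comes from both leading coefficients: the first bracketed term restricts to $a_0$ times the Fubini--Study form on $\Sigma_0$ (using $a_0>0$), and the second contributes a $d\xi^1\wedge d\bar\xi^1$ term with coefficient proportional to $a_1(1+|\xi^2|^2)>0$.

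The extension across $\Sigma_\infty$ is mirror-symmetric: working in the infinity chart $(\sigma,\xi^2)=(1/z^1,z^2/z^1)$, where $\Sigma_\infty=\{\sigma=0\}$ and $1/w=|\sigma|^2/(1+|\xi^2|^2)$, I would integrate \eqref{calabi:condition1} to write $P(r)=b_0\,r+Q(1/w)+\mathrm{const}$ with $Q(x)=-b_1 x-\tfrac{b_2}{2}x^2+O(x^3)$ smooth at $x=0$, and then use $\I\,\partial\bar\partial\log|\sigma|^2=0$ off $\Sigma_\infty$ to produce a smooth extension. Positivity of $\omega$ at $\Sigma_\infty$ requires $b_0>0$ and $-b_1>0$, exactly the signs asserted in \eqref{calabi:condition1}.

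To finish, restrict the smooth form $\omega$ to $\Sigma_0=\{\xi^1=0\}$: the computation above gives
$$\omega|_{\Sigma_0}=a_0\,\I\,\frac{d\xi^2\wedge d\bar\xi^2}{(1+|\xi^2|^2)^2},$$
whose integral over $\Sigma_0\cong\CP^1$ is $\pi a_0$ in the chosen normalization; the analogous computation at $\sigma=0$ gives $|\Sigma_\infty|=\pi b_0$. Since $[\Sigma_0]$ and $[\Sigma_\infty]$ are disjoint sections of self-intersection $-1$ and $+1$ spanning $H^{1,1}(M;\R)$, writing $[\omega]=\beta[\Sigma_\infty]-\alpha[\Sigma_0]$ and pairing with each section via the intersection form pins down $\alpha=a_0$, $\beta=b_0$. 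I expect the principal obstacle to be careful bookkeeping -- tracking the $\sqrt{-1}$ convention, powers of $\pi$, and the normalization of $[\Sigma_0],[\Sigma_\infty]$ -- together with the clean passage from a potential on $\Czero$ to a smooth $(1,1)$-form on $M$ via $\I\,\partial\bar\partial\log|\cdot|^2=0$ off the zero set; conditions \eqref{calabi:condition0}--\eqref{calabi:condition1} are then precisely what is needed to make the bracketed expressions smooth at the two gluing loci.
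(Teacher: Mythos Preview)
The paper does not give its own proof of this lemma: it is stated as a citation of Calabi's result (the sentence preceding it is ``Summarizing the above:'' and the lemma is labeled ``Calabi, \cite{C}''). So there is no in-paper argument to compare against.

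Your sketch is essentially the standard proof of Calabi's extension lemma and is correct in outline. Integrating the expansions \eqref{calabi:condition0}--\eqref{calabi:condition1} to write $P=a_0r+\tilde P(w)$ (resp.\ $P=b_0r+Q(1/w)$) with $\tilde P,Q$ smooth at the origin, and then using that $\I\,\partial\bar\partial\log|\cdot|^2$ vanishes off the zero set so that the singular $a_0r$ (resp.\ $b_0r$) term contributes only the Fubini--Study piece along the base, is exactly how one shows that $\omega$ extends smoothly across the two sections. Your identification of the positivity requirements with the sign conditions on $a_0,a_1,b_0,b_1$ is correct, and the area/class computation is the right one. The only point to be a little careful about is the choice of chart near $\Sigma_\infty$: the pair $(1/z^1,\,z^2/z^1)$ gives coordinates on a chart of $\CP^2$, not directly on the Hirzebruch surface $M$; one should instead use a fiberwise inversion in the $\CP^1$-fiber together with an affine chart on the base (cf.\ the bundle description in the paper's Appendix~\ref{ap6.1}). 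The computation you wrote still goes through verbatim once the correct chart is used, since the relation $1/w=|\sigma|^2/(1+|\xi^2|^2)$ you need is what matters.
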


%-------------------------------------
\subsection{Curvature Terms}
%-------------------------------------

On \Kahler\ manifolds, the Ricci tensor is given locally by
\begin{equation*}
\Rc_{\ab}=-\dalphabetabar\log\det g.
\end{equation*}
\noindent In particular, for $g$ as in (\ref{eq:g}), one has globally
\begin{equation}\label{eq:Ricci}
\Rc_{\ab}=e^{-r}\psi\delta_{\alpha\beta}+e^{-2r}(\psi_r-\psi)\bar{z}^{\al}z^{\be}
\end{equation}
\noindent where $\psi=- \partial_r(\log\det g)=  2-\frac{\varphi_r}{\varphi}-\frac{\varphi_{rr}}{\varphi_r}$. From equations $(\ref{eq:g})$ and $(\ref{eq:Ricci})$, we compute the eigenvalues of the Ricci curvature endomorphism\footnote{The map $\Rc: TM \longrightarrow TM$ obtained by raising one index.}
\begin{equation}\label{eigen}
\begin{array}{rcccl}
\lambda_1&=&\frac{\psi}{\varphi}&\text{with eigenvector}& U=\bar{z}^2\frac{\partial}{\partial z^1}+\bar{z}^1\frac{\partial}{\partial z^1}\\
\lambda_2&=&\frac{\psi_r}{\varphi_r}&\text{with eigenvector}& V=z^1\frac{\partial}{\partial z^2}+z^2\dfrac{\partial}{\partial z^2}.
\end{array}
\end{equation}
and, in particular, the scalar curvature
\begin{equation}\label{scalar}
R(r,t)= \frac{2}{\varphi} \left(2-\frac{\varphi_r}{\varphi}-\frac{\varphi_{rr}}{\varphi_r}\right) +\frac{2}{\varphi_r} \left[\left(-\frac{\varphi_r}{\varphi}\right)_r+\left(-\frac{\varphi_{rr}}{\varphi_r}\right)_r\right].
\end{equation}
Using Calabi's conditions we find for $r$ near $-\infty$
\begin{equation}\label{eigen1}
\begin{array}{rcl}
\lambda_1 & =& \frac{1}{a_0} +O(e^{r})\\
\lambda_2 &=&- \frac{1}{a_0}-\frac{2a_2}{a_1^2} +O(e^{r}),
\end{array}
\end{equation}
and for $r$ near $+\infty$
\begin{equation}\label{eigen2}
\begin{array}{rcl}
\lambda_1 & = & \frac{3}{b_0} +O(e^{-r})\\
\lambda_2 & = &\frac{1}{b_0}+\frac{2b_2}{b_1^2} +O(e^{-r}).
\end{array}
\end{equation}
Moreover, for the Riemann curvature, a direct computation shows
\begin{eqnarray}\label{riemann}
R_{\al\bar{\beta}\gamma\bar{\delta} } &=& e^{-4r}\left[-\varphi_{rrr}+4\varphi_{rr} -2\varphi_r +2\varphi -4 \frac{\varphi_r^2}{\varphi} + \frac{\varphi^2_{rr}}{\varphi_r}\right] \bar{z}^\al z^\beta\bar{z}^\gamma z^\delta\notag\\
&&+ e^{-3r}\left[\varphi_r- \varphi_{rr}- \varphi +\frac{\varphi_r^2}{\varphi}\right] (\bar{z}^\al z^\beta \delta^{\gamma\delta}+ \bar{z}^\al \delta^{\be\gamma}  z^\delta \notag\\
&&+ \delta^{\al\be}\bar{z}^\gamma z^\delta+\delta_{\al\delta}z^\be\bar{z}^{\gamma})  + e^{-2r}\left[-\varphi_r+\varphi \right] (\delta_{\al\be}\delta_{\gamma\delta}+ \delta_{\al\delta}\delta_{\be\gamma}).
\end{eqnarray}

%---------------------------------------------------------------------
\subsection{The effect of Ricci flow}
%---------------------------------------------------------------------

\indent  From equations $(\ref{eq:g})$ and $(\ref{eq:Ricci})$, one can see that $g(t)$ evolves by Ricci flow $\partial_t g=-\Rc(g)$ if, and only if, $\varphi$ evolves by $\varphi_t=-\psi$, that is,
\begin{equation}\label{eq:varphi}
\varphi_t=\frac{\varphi_{rr}}{\varphi_r}+\frac{\varphi_r}{\varphi}-2.
\end{equation}

\begin{remark} Equation $(\ref{eq:varphi})$ looks alarming from the PDE point of view, as it might de\-ge\-ne\-rate. On the other hand, we recall that for potentials $\varphi$ that yield \Kahler~metrics on $\Czero$ we have $\varphi_r>0$ and this condition is preserved, so $(\ref{eq:varphi})$ is parabolic.
\end{remark}

The \Kahler~ class will evolve as in \eqref{kahlerclass} and
\begin{eqnarray*}
\varphi(-\infty,t)=a(t)=a_0-t,\,\,\,\varphi(+\infty,t)=b(t)=b_0-3t,
\end{eqnarray*}
so the flow will become singular no later than $t=a(0)$.  In fact, a general result in \Kahler-Ricci flow,  see \cite{TZ}, says that the \Kahler-Ricci flow exists and is smooth up until the first time $t=T$ where the \Kahler~ class $[w(T)]$ ceases to be \Kahler.  In our case, because $b(0)>3a(0)$, this happens precisely at $T=a(0)$. Furthermore,  since $|\Sigma_0|=\pi a(t)$, the section $\Sigma_0$ will vanish when $t=T$ and, as it turns out,  $g(T)$ is still smooth in $M\backslash\Sigma_0$, see \cite{SW}.

The following scale invariant estimate is an immediate consequence of the maximum principle and will be useful later.

\begin{lemma}\label{phirphi}
Let $F=\frac{\varphi_r}{\varphi}$. Then, $0\leq {F(r,t)\leq \max\{\max F(\cdot, 0), 1\}.}$
\end{lemma}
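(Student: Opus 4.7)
The lower bound $F \geq 0$ is immediate: both $\varphi$ and $\varphi_r$ are positive for finite $r$ (this is Calabi's positivity condition (\ref{eq:Kahler condition}), preserved by the flow as recorded in the remark after (\ref{eq:varphi})), so $F = \varphi_r/\varphi \geq 0$, with equality only at $r = \pm\infty$. For the upper bound I would use the parabolic maximum principle.

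The plan is to derive an evolution equation for $F$ from equation (\ref{eq:varphi}) and then examine what happens at an interior spatial maximum. Differentiating $\varphi_t = \varphi_{rr}/\varphi_r + F - 2$ in $r$ gives
\begin{equation*}
\varphi_{rt} = \frac{\varphi_{rrr}}{\varphi_r} - \frac{\varphi_{rr}^2}{\varphi_r^2} + \frac{\varphi_{rr}}{\varphi} - F^2,
\end{equation*}
and then using $F_t = \varphi_{rt}/\varphi - F \varphi_t/\varphi$ together with the identity $F\varphi_{rr}/\varphi_r = \varphi_{rr}/\varphi$, the terms involving $\varphi_{rr}/\varphi$ cancel. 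After rewriting the remaining second derivatives in terms of $F$ (using $\varphi_{rr}/\varphi = F_r + F^2$ and $\varphi_{rrr}/\varphi_r - \varphi_{rr}^2/\varphi_r^2 = \partial_r^2 \log \varphi_r$), I expect to arrive at a schematic evolution equation of the form
\begin{equation*}
F_t = \frac{1}{\varphi}\left[\frac{F_{rr}}{F} - \frac{F_r^2}{F^2} + F_r + 2F(1-F)\right],
\end{equation*}
in which the first three bracketed terms vanish or are nonpositive at an interior maximum, while the reaction term is $2F(1-F)/\varphi$.

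At a spatial maximum $r_0$ of $F(\cdot,t)$ one thus has $F_t(r_0,t) \leq \tfrac{2F(1-F)}{\varphi}$, which is nonpositive whenever $F(r_0,t) \geq 1$. To legitimately extract a maximum I need the supremum to be attained in the interior of $\mathbb{R}$; this follows from Calabi's conditions (\ref{calabi:condition0})--(\ref{calabi:condition1}), which give $\varphi_r \to 0$ while $\varphi \to a(t), b(t) > 0$ as $r \to \mp\infty$, hence $F \to 0$ at both ends (uniformly on any finite time interval since $a(t), b(t)$ are uniformly positive there). Setting $M(t) := \sup_r F(\cdot,t)$, a standard ODE comparison argument then shows $M(t) \leq \max\{M(0), 1\}$: if $M(0) \geq 1$ then $M$ can never strictly exceed $M(0)$, and if $M(0) < 1$ then $M$ cannot cross the threshold $1$ since at the first such instant the maximum principle would force $M$ to be non-increasing.

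The only subtlety I anticipate is making the maximum principle on a non-compact domain fully rigorous. I would handle this either by standard Hamilton-type barrier arguments (add $\varepsilon t$ to $F$, apply the maximum principle on $[-R,R]\times [0,t^*]$ using the uniform decay $F \to 0$ at $\pm\infty$ to control the spatial boundary, and let $R \to \infty$ and $\varepsilon \to 0$), or by working directly with $M(t)$ and justifying its Lipschitz behavior via Hamilton's trick. Beyond that, the argument is purely a computation plus the maximum principle.
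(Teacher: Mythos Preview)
Your approach is correct and essentially identical to the paper's. Your evolution equation for $F$ agrees with the paper's (the paper writes it as $F_t = \frac{F_{rr}}{\varphi_r} + \frac{F_r}{\varphi}\bigl(1 - \frac{F_r}{F^2}\bigr) + \frac{2F}{\varphi}(1-F)$, which becomes yours upon substituting $\varphi_r = F\varphi$), and the maximum-principle argument at an interior spatial maximum, together with $F\to 0$ at $r=\pm\infty$, is exactly what the paper does; your added remarks on rigorizing the non-compact maximum principle are more careful than the paper but not a different method.
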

\begin{proof}
Because of \eqref{eq:Kahler condition}, $F\geq0$. To prove the upper bound, let  $r_0$ be a local spatial maximum of $F$, at which we must have
$$F_{rr}(r_0)\leq 0\,\,\text{and}\,\,F_r(r_0)=0.$$
We then compute the evolution of $F$
$$F_t=\dfrac{F_{rr}}{\varphi_r}+\dfrac{F_r}{\varphi}\left(1-\dfrac{F_r}{F^2}
\right)+2\dfrac{F}{\varphi}(1-F),$$
and thus note that 
$$F_t(r_0)=\dfrac{1}{\varphi}\left[\dfrac{F_{rr}}{F}+2F(1-F)\right]\leq
\dfrac{2}{\varphi}F(1-F).$$ 
\noindent So,  at a local spatial maximum where $F(r_0)>1$ (respectively $\geq 1$), $F(x,t)$ is decreasing (respectively non-increasing) in time, and the lemma follows since $F\equiv 0$ in the boundary points $r=-\infty,+\infty$.
\end{proof}
%---------------------------------------------------------------------
\subsection{Soliton metrics on $L$ and $M$}
%---------------------------------------------------------------------
Following \cite{FIK}, consider a $U(2)$-invariant gra\-dient shrinking soliton metric on $L$ or $M$, normalized such that $|\Sigma_0|=\pi$. Then it must have a potential $\varphi$ satisfying
\begin{equation}\label{solitoneq}
\dfrac{\varphi_{rr}}{\varphi_r}+\dfrac{\varphi_{r}}{\varphi}-C\varphi_r+\varphi-2=0,
\end{equation}
for some constant $C$ and with $\lim \varphi=1$ as $r\rightarrow -\infty$ and another asymptotic  condition as $r\rightarrow +\infty$, depending on whether one is looking for a soliton on $M$ or $L$.

In the case one is looking on $M$, by the independent work of Koiso in \cite{Ko} and Cao in \cite{Cao1},  $C$ must be a constant between $\frac{1}{2}$ and $1$ and there will exist only one such potential, modulo translations in $r$. This soliton has positive Ricci curvature, as Cao observed in \cite{Cao1}, and satisfies $a_0=1$, $ b_0=3$.

For a soliton on $L$, Feldman-Ilmanen-Knopf proved that the $C$ must be $\sqrt{2}$ and that there exists only one such potential, again modulo translations in $r$. For our purposes, it is relevant to note that this soliton has Ricci curvature of mixed sign: $\lambda_2<0$ for $r$ near $-\infty$. In fact, using the soliton equation \eqref{solitoneq} and the expression \eqref{eigen} for the eigenvalue one finds that
$\lambda_2=1-\sqrt{2}\frac{\varphi_{rr}}{\varphi_r}$
and since $\lim\limits_{r\rightarrow-\infty}\frac{\varphi_{rr}}{\varphi_r}=1$, we have that $\lambda_2$ is negative near the section $\Sigma_0$.

%--------------------------------------------------------------

\section{Dilation Variables, Type I blow-up, \\ and convergence modulo Diffeo\-mor\-phisms}

%---------------------------------------------------------------

%----------------------------------------
\subsection{Dilation Variables}
%----------------------------------------
What we know so far about the singularity formation is that it occurs along the section $\Sigma_0$, which shrinks to a point by \cite{SW}, and that it is type I \cite{So}. Hence it will be useful to use {\it parabolically dilated variables} that allow us to {\it zoom in on $\Sigma_0$ in a type I fashion}.\\
\indent Given an evolution $\varphi(r,t)$ with singular time $T$ as before, we define the {\it dilated time} variable $\tau=-\log(T-t)$, the {\it dilated spatial} variable $\rho = r +\tau$ (which correspond to complex coordinates $\zeta=e^{\tau/2}z$) and set
 \begin{equation}\label{dilated}
  \phi(\rho,\tau)=e^{\tau}\varphi(r(\rho,\tau),t(\tau)).
  \end{equation}
 \indent The function $\phi(\rho,\tau)$ evolves by
 \begin{equation}\label{eq:evoldil}
 \phi_\tau=\dfrac{\phi_{\rho\rho}}{\phi_\rho}+\dfrac{\phi_\rho}{\phi}-\phi_\rho+\phi-2
 \end{equation}
  and is a \Kahler~potential on $M$ in complex coordinates $\zeta$. To see that,  we note that it must satisfy $\phi,\phi_\rho>0$ for all $\tau$, and the Calabi conditions $(\ref{calabi:condition0})$ and $(\ref{calabi:condition1})$ --- or, alternatively, that $\phi$ represents the metric $\bar{g}(\tau)$ on $M$ equivalent to $g(t)$ scaled by $e^\tau=\frac{1}{T-t}$ and pulled back by the diffeomorphism $z \longrightarrow e^{-\tau/2}z$. \\
\indent Now that $\rho,\tau$ are defined, we explain what we mean by zoom in $\Sigma_0$ in a type I fashion, first on the level of the potential and then on the level of the metric.

For fixed $\rho$, we let $t\nearrow T$ and thus have $\tau\nearrow +\infty$ and $(r(\rho,\tau),t(\tau))\rightarrow (-\infty,T)$. The dilation
$$\phi(\rho,\tau) = \frac{1}{T-t} \varphi(\rho-\tau, t)$$
\noindent is then a type I (note the factor $\frac{1}{T-t}$) zoom in on how $\varphi$ is going to zero along $\Sigma_0$, since $\varphi(\rho-\tau,t)\rightarrow \varphi (-\infty,T)=0$.

Geometrically, the metric $\bar{g}(\tau)$ given by $\phi(\rho,\tau)$ is just  $\frac{1}{T-t}g(t)$ modified by diffeomorphism, and if  $g(t)$ has a type I singularity at $t=T$, $\bar{g}(\tau)$ has bounded curvature as $\tau\nearrow +\infty$. Moreover,
\begin{eqnarray*}
|\Sigma_0|_{\bar{g}(\tau)} &=&\pi\\
|\Sigma_\infty|_{\bar{g}(\tau)} &=&[(b_0-3a_0)e^\tau+3]\pi,
\end{eqnarray*}
\noindent which indicates that as $\tau\nearrow+\infty$, $\Sigma_\infty$ is being blown away and suggests that  the metrics $\bar{g}(\tau)$ on $M$ are becoming more and more like metrics on $L$ where $|\Sigma_0|=\pi$, as one recalls from Appendix \ref{ap6.1}.

Our goal is to prove that indeed the limit of these metrics will be mo\-de\-led by the evolution of the FIK soliton metric on $L$ constructed in \cite{FIK}. But there are major difficulties that  do not allow us to prove directly  such convergence by working with $\rho,\tau$. One comes from the fact that the FIK potential is not stationary in these coordinates; in fact, the soliton is still moving by diffeomorphisms, or more precisely, it is translating in the $\rho$ variable.
The other is the fact that there actually exist a whole family of FIK potentials, generated by the translations in $r$. These difficulties lead us to the approach presented in the next section.

%--------------------------------------------
\subsection{Equations in $\phi,\tau$ variables}
%--------------------------------------------

Let $\tau=-\log(T-t)$ and $\rho=r+\tau$ be the dilation variables introduced above. Because $\phi_\rho>0$ along the flow, we can actually write $\rho$ as a function of $\phi$ at any fixed time $\tau$ and thus consider the function
\begin{equation}
y(\phi,\tau)= \phi_\rho(\rho,\tau),
\end{equation}
which, for any given $\tau$, is defined on the inverval $[1,(b_0-3a_0)e^\tau+3]$, satisfying
$y(1)=y((b_0-3a_0)e^\tau+3)=0$ and $y(\phi)$ positive otherwise.

We next find the evolution equation of $y(\phi,\tau)$. First we note:  since $\phi_\rho=y$, we have $\phi_{\rho\rho}=y_\rho=y_\phi y$ and $\phi_{\rho\rho\rho}= y_{\phi\phi}y^2+y_\phi^2y$. And we then compute $\partial_\tau\big|_\rho y$, which in our notation means {\it the derivative of $y$ with respect to $\tau$ while fixing $\rho$}.
\begin{eqnarray*}
\partial_\tau\big|_\rho y&=& \phi_{\rho\tau} \\
&=& \frac{\phi_{\rho\rho\rho}}{\phi_{\rho}}- \left(\frac{\phi_{\rho\rho}}{\phi_\rho}\right)^2 + \frac{\phi_{\rho\rho}}{\phi}- \left(\frac{\phi_{\rho}}{\phi}\right)^2 +\phi_{\rho}-\phi_{\rho\rho}\\
%&=&y_{\phi\phi} y + y_\phi^2- y_\phi^2+ \frac{y_\phi y}{\phi}-\left(\frac{y}{\phi}\right)^2+y-y_\phi y\\
&=& y_{\phi\phi} y + \frac{y_\phi y}{\phi}-\left(\frac{y}{\phi}\right)^2+y-y_\phi y.
\end{eqnarray*}
Finally, since $\partial_\tau\big|_\phi y = \partial_\tau\big|_\rho y - y_\phi\phi_\tau $, we have
\begin{eqnarray}\label{evol}
\partial_\tau\big|_\phi y&=& y_{\phi\phi} y + \frac{y_\phi y}{\phi}-\left(\frac{y}{\phi}\right)^2+y-y_\phi y - y_\phi \phi_\tau \notag \\
%&=&Y_{\phi\phi} Y + \frac{Y_\phi Y}{\phi}-\left(\frac{Y}{\phi}\right)^2+Y-\sqrt{2}Y_\phi Y \notag \\
%&&  - Y_\phi \left(Y_\phi + \frac{Y}{\phi} -\sqrt{2}Y +\phi -2 \right) \notag\\
%\partial_\tau\big|_\phi Y
&=& y_{\phi\phi} y + (2-\phi-y_\phi)y_\phi + y\left( 1- \frac{y}{\phi^2}\right).
\end{eqnarray}

\begin{remark}\label{boundy}
Because of Lemma \ref{phirphi}, $\frac{y}{\phi}$ is uniformly bounded in time.
\end{remark}

The advantages of these variables are two-fold. First of all, they do not see translations in the $\rho$ variable and thus the whole family of FIK potentials are represented by just one  stationary potential $\mathcal{Y}=\mathcal{Y}(\phi^{\text{FIK}})$. Secondly, the non-linearities of \eqref{evol} are mild when compared with \eqref{eq:evoldil}, and this allow us to develop a barrier method based on the comparison principle in the next section.

\begin{remark}
In fact, one can check that all the FIK potentials satisfy the same equation when written using the coordinates as above
$$\Y(\phi)=\phi_\rho=\dfrac{ \phi( \phi-2)+  \sqrt{2}  (\phi-1) +1}{ \sqrt{2} \phi},$$
and also that $\partial_\tau\big|_\phi \mathcal{Y}=0$.
\end{remark}

%-------------------------------------------------------------------
\subsection{Comparison and convergence modulo diffeo\-mor\-phisms for a large class of potentials}
%-------------------------------------------------------------------
In this section we use techniques based on the comparison principle to prove an important step  towards Theo\-rem \ref{thm:main}, which is convergence modulo diffeo\-mor\-phisms for a large class of potentials.
\\\\
\noindent{\bf Notation.} {\it In what follows we will write just $\partial_\tau$ as short for $\partial_\tau\big|_\phi$. In par\-ticu\-lar, $\partial_\tau$ and $\partial_\phi$ are commuting derivatives.}
\\\\
\indent The first thing we prove is that for any initial data satisfying the conditions of Theorem  \ref{thm:main}, one always has a $C^1$ bound.

\begin{lemma}\label{boundyphi}
For any data $y(\phi,\tau)$  coming from Ricci flow as in Theorem \ref{thm:main},  the derivative $y_\phi$ is uniformly bounded in time.
\end{lemma}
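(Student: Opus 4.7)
The plan is to apply the maximum principle to $w:=y_\phi$. Differentiating \eqref{evol} in $\phi$ (noting that $\partial_\tau$ and $\partial_\phi$ commute in the $(\phi,\tau)$ variables) and collecting terms, I would first derive
\[
\partial_\tau w \;=\; y\, w_{\phi\phi} + (2-\phi-w)\,w_\phi + \frac{2y}{\phi^{2}}\left(\frac{y}{\phi}-w\right),
\]
which is parabolic in the interior where $y>0$ and degenerates at the endpoints $\phi=1$ and $\phi=\phi_{\max}(\tau):=(b_0-3a_0)e^{\tau}+3$.

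At an interior spatial maximum of $w$ one has $w_\phi=0$ and $w_{\phi\phi}\leq 0$, so
\[
\partial_\tau w \;\leq\; \frac{2y}{\phi^{2}}\left(\frac{y}{\phi}-w\right).
\]
By Remark \ref{boundy} (a consequence of Lemma \ref{phirphi}), the ratio $y/\phi$ is uniformly bounded in time by some constant $C_1$; hence at an interior maximum with $w>C_1$ the right-hand side is strictly negative, and such a maximum cannot grow. A symmetric argument applies at an interior minimum: if $w<0$ there, then $y/\phi-w>0$, whence $\partial_\tau w\geq 0$, and the minimum cannot become more negative.

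It remains to control $w$ at the endpoints. Using Calabi's expansion \eqref{calabi:condition0} together with the identity $\phi(\rho,\tau)=e^{\tau}\varphi(\rho-\tau,t)$ and the fact that $a_0(t)e^{\tau}=1$, one checks that near $\rho=-\infty$ both $\phi-1$ and $y$ are asymptotic to $a_1(t)e^{\rho}$, so $w=y_\phi\to 1$ as $\phi\to 1$. A parallel computation from \eqref{calabi:condition1} gives $y\sim \phi_{\max}(\tau)-\phi$ near the right endpoint, hence $w\to -1$ as $\phi\to\phi_{\max}(\tau)$. Combining these boundary values with the interior estimates above yields the uniform bound
\[
\min\{-1,\,\inf_\phi w(\phi,0)\} \;\leq\; w(\phi,\tau) \;\leq\; \max\{1,\,C_1,\,\sup_\phi w(\phi,0)\},
\]
which is finite provided Definition \ref{defc} guarantees an initial bound on $w(\cdot,0)$.

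The main obstacle I anticipate is rigorously justifying the maximum principle at the degenerate, time-dependent endpoints: the spatial interval is non-compact in $\tau$ (its right endpoint escapes to infinity) and the equation loses parabolicity at both boundaries. A clean way around this would be to restrict to a compact subinterval $[1+\delta,\phi_{\max}(\tau)-\delta]$, apply the standard maximum principle there, and then send $\delta\to 0$ using the above boundary asymptotics to propagate the control to the full interval. Alternatively, a perturbation argument (replacing $w$ by $w+\varepsilon e^{-\tau}$ or a similar decaying auxiliary function) can be used to turn non-strict spatial extrema into strict ones, at which the sign conclusions above apply directly, and then let $\varepsilon\to 0$.
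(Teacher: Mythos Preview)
Your argument is correct and is essentially the paper's own proof: one differentiates \eqref{evol} to obtain the evolution of $y_\phi$ (the paper writes the same identity with $y_{\phi\phi\phi}$ in place of your $w_{\phi\phi}$), records the boundary values $\pm 1$, and applies the maximum principle at interior extrema using the uniform bound on $y/\phi$ from Remark~\ref{boundy}. Your additional discussion of the degeneracy at the time-dependent endpoints is extra care that the paper does not make explicit.
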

\begin{proof}
Since at the boundary $y_\phi$ takes values $1$ or $-1$, we only need to deal with interior spatial maxima and minima. We write the evolution equation for $y_\phi$:
\begin{equation}\label{eqyphi}
y_{\phi\tau}= yy_{\phi\phi\phi} + [2-\phi-y_\phi]y_{\phi\phi} -2\frac{yy_\phi}{\phi^2} +\frac{2y^2}{\phi^3}.
\end{equation}
If  $y_\phi$ has a negative local spatial minimum at  $\phi_0$, then $y_{\phi\phi}(\phi_0)=0$,  $y_{\phi\phi\phi}(\phi_0)\geq 0$, and thus
$$y_{\phi\tau}(\phi_0)= yy_{\phi\phi\phi}(\phi_0)  -2\frac{yy_\phi}{\phi^2}(\phi_0) +\frac{2y^2}{\phi^3}(\phi_0)>0,$$
so $y_\phi$ is uniformly bounded from below.

Finally, let $\phi_0$ be a local spatial maximum of $y_\phi$, so that $y_{\phi\phi}(\phi_0)=0$ and  $y_{\phi\phi\phi}(\phi_0)\leq 0$. By Remark \ref{boundy}, $\frac{y}{\phi}<C$ for some constant $C$ independent of time. Suppose $y_\phi(\phi_0)>C$, then
\begin{eqnarray*}
y_{\phi\tau}(\phi_0)&=& yy_{\phi\phi\phi}(\phi_0)  -2\frac{yy_\phi}{\phi^2}(\phi_0) +\frac{2y^2}{\phi^3}(\phi_0)\\
&\leq&2\frac{y}{\phi^2}\left[ -y_\phi + \frac{y}{\phi} \right] <0.
\end{eqnarray*}

We hence conclude $y_\phi$ is uniformly bounded from above too and the lemma is proved.
\end{proof}

\begin{remark}\label{boundedcoef}
The above lemma, together with Remark \ref{boundy}, says that  the evolution \eqref{evol} has bounded coefficients on any compact interval $[1,\phi_0]$.
\end{remark}
\begin{remark}
Because $y_\phi=\frac{\phi_{\rho\rho}}{\phi_\rho}$, the lemma will be useful to prove $C^2$ bounds for $\phi$.
\end{remark}

We next construct  upper and lower barriers that for  ``most'' initial data will trap the solution to evolution \eqref{evol} and squeeze it to the FIK potential $\Y(\phi)=\frac{ \phi( \phi-2)+  \sqrt{2}  (\phi-1) +1}{ \sqrt{2} \phi}$.

\begin{definition}[\it Metrics in the class $\mc{C}$]\label{defc}
Let $\mc{C}$ be the class of all initial $U(2)$-invariant metrics  on $M$  belonging to the \Kahler~  class $b(0)[\Sigma_\infty]-a(0)[\Sigma_0]$ with $b(0)>3a(0)$, and such that, moreover, the parabolic blow up $\phi$ of the potential $\varphi$ satisfies, when writing $\phi_\rho$ in $\phi$ coordinates as before,
$$y(\phi,0)> \Y(\phi) - \frac{1}{5}\phi^2.$$
\end{definition}

Let us remark that  $\Y(\phi) - \frac{1}{5}\phi^2$ will be exactly the initial barrier that we will use for the evolution \eqref{evol}, thus we are restricting ourselves to potentials that are initially strictly above it. This barrier is mostly negative, but is positive and small (strictly less than 0.06) on a small neighborhood of $\phi=2$. This implies that a large family of initial data belongs to the class $\mc{C}$.

\begin{proposition}\label{conv1}
For any initial data $y(\phi,0)$ in the class $\mc{C}$, one has that $y(\phi,\tau)$ remains in the class $\mc{C}$ and converges uniformly on compact subsets to $\Y(\phi)$  as $\tau\nearrow\infty$.
\end{proposition}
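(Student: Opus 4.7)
The strategy is to trap $y(\phi,\tau)$ between time-dependent sub- and super-solutions of \eqref{evol} that both contract to the stationary FIK profile $\mathcal{Y}(\phi)$ as $\tau \nearrow \infty$, using the parabolic comparison principle on the (moving) domain $[1,(b_0-3a_0)e^\tau+3]$. Since $\mathcal{Y}$ itself satisfies $L[\mathcal{Y}]=0$ (where $L$ denotes the right-hand side of \eqref{evol}), the natural ansatz for the barriers is $\mathcal{Y}(\phi) \pm \alpha(\tau)h(\phi)$ with a profile $h\ge 0$ and $\alpha(\tau) \searrow 0$; substitution into the sub-/supersolution inequality reduces, to leading order, to a scalar ODE inequality on $\alpha$ that admits a decaying solution.

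For invariance of $\mc{C}$, the first step is to verify, by direct computation using the explicit form of $\mathcal{Y}$, that the time-independent profile $\mathcal{Y}(\phi)-\tfrac{1}{5}\phi^2$ is a subsolution, i.e.\ $L[\mathcal{Y}-\tfrac{1}{5}\phi^2] \geq 0$ on $[1,\infty)$. Initial data in $\mc C$ lies strictly above this barrier, both profiles vanish (to first order) at the degenerate boundary $\phi=1$, and at the moving right endpoint the a priori bound $y\leq C\phi$ from Lemma \ref{phirphi} easily dominates the barrier; together with Remark \ref{boundedcoef} to ensure parabolicity on compact subintervals, the comparison principle gives $y(\phi,\tau)\geq \mathcal{Y}(\phi)-\tfrac{1}{5}\phi^2$ for all $\tau\ge 0$, so $y(\cdot,\tau)$ stays in $\mc C$. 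Upgrading to a time-dependent lower barrier $y_-(\phi,\tau) = \mathcal{Y}(\phi)-\alpha(\tau)\phi^2$ with $\alpha(0)\leq \tfrac15$ and $\alpha(\tau)\searrow 0$ at the rate dictated by the linearization of \eqref{evol} about $\mathcal{Y}$ then yields $\liminf_{\tau\to\infty} y(\phi,\tau) \geq \mathcal{Y}(\phi)$ uniformly on compact subsets.

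For the matching upper bound I would construct a symmetric time-dependent supersolution $y_+(\phi,\tau) = \mathcal{Y}(\phi)+\beta(\tau)g(\phi)$, with $g(\phi)\geq 0$ chosen to accommodate the linear growth of $\mathcal{Y}$ at infinity, so that $\partial_\tau y_+ \geq L[y_+]$ reduces again to a solvable ODE for $\beta$. The initial inequality $y_+(\phi,0)\geq y(\phi,0)$ can be enforced using the uniform estimates $y/\phi \leq C$ from Lemma \ref{phirphi} and $|y_\phi|\leq C$ from Lemma \ref{boundyphi}, at the cost of letting $\beta(0)$ depend on the specific initial datum. Combining the two barriers produces $|y(\phi,\tau)-\mathcal{Y}(\phi)| \leq \alpha(\tau)\phi^2 + \beta(\tau)g(\phi)$, which vanishes uniformly on each compact $[1,\phi_0]$. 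The main obstacle is algebraic and boundary-behavioral: the sub-/supersolution inequalities are genuinely nonlinear, and \eqref{evol} degenerates at $\phi=1$ where the coefficient $y$ of $y_{\phi\phi}$ vanishes, so one must verify by hand that each barrier has the correct vanishing order at $\phi=1$ and that the lower-order terms dominate in the degenerate regime. The quadratic $\phi^2$ in the lower barrier is dictated by the $\mathcal{O}(\phi)$ asymptotics of $\mathcal{Y}$, and isolating an analogous admissible upper profile $g(\phi)$ that simultaneously makes $\beta(\tau)g$ a supersolution and dominates the initial data is where the bulk of the work lies.
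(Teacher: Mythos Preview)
Your strategy is exactly the paper's: barriers of the form $\Y(\phi)\pm\lambda(\tau)\phi^{2}$ with $\lambda$ decaying exponentially, together with the comparison principle on the moving interval. The paper carries out the single computation
\[
(\partial_\tau-\E)\bigl[\Y-\lambda\phi^{2}\bigr]
=\lambda\Bigl((\delta+3\lambda-1)\phi^{2}+2(2-\sqrt2)\phi-3(2-\sqrt2)\phi^{-1}\Bigr),
\qquad \dot\lambda=-\delta\lambda,
\]
and reads off both barriers from it: $y_1=\Y-\tfrac15 e^{-\delta\tau}\phi^{2}$ (subsolution for $0<\delta\ll1$) and $y_2=\Y+\lambda_0 e^{-\tau/2}\phi^{2}$ (supersolution for every $\lambda_0>0$). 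Two places where you make the task harder than it is:

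\textbf{The upper profile.} There is no search for an admissible $g$: the same $g(\phi)=\phi^{2}$ works. The displayed identity with $\lambda$ replaced by $-\lambda_0 e^{-\tau/2}$ immediately gives $(\partial_\tau-\E)[y_2]>0$, and since $y(\phi,0)\le C\phi$ while $\Y+\lambda_0\phi^{2}$ grows quadratically, choosing $\lambda_0$ large forces $y_2(\cdot,0)>y(\cdot,0)$.

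\textbf{The endpoint $\phi=1$.} The barriers do \emph{not} vanish there: $\Y(1)=0$, so $y_1(1,\tau)=-\tfrac15 e^{-\delta\tau}<0=y(1,\tau)$ and $y_2(1,\tau)=\lambda_0 e^{-\tau/2}>0=y(1,\tau)$ for all $\tau$. No matching of vanishing orders and no special degenerate-boundary analysis is needed; the ordering at $\phi=1$ is strict and trivial. At the right endpoint $\phi_R(\tau)=(b_0-3a_0)e^{\tau}+3$ one has $y=0$; since $\Y(\phi)<\phi$, the lower barrier satisfies $y_1(\phi_R,\tau)<\phi_R-\tfrac15 e^{-\delta\tau}\phi_R^{2}<0$ as soon as $\phi_R>5e^{\delta\tau}$, which holds for $\delta$ small because $\phi_R$ grows like $e^{\tau}$. (Your invocation of $y\le C\phi$ there is beside the point, since $y(\phi_R,\tau)=0$.)
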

\indent We note that the ``elliptic" operator in \eqref{evol}
$$\E [y] = y_{\phi\phi} y + (2-\phi-y_\phi)y_\phi + y\left( 1- \frac{y}{\phi^2}\right)$$
can be written as a linear plus a quadratic part
$$\E[y]=\li[y]+\q[y],$$
where $\li[y]=(2-\phi)y_\phi+y$ and $\q[y]=yy_{\phi\phi}-y_\phi^2-\frac{y^2}{\phi^2}$. The non-linearity of $\E[\cdot]$ is such that for given functions $y(\phi),s(\phi)$:
\begin{eqnarray*}
\E[y+s] &=& \li[y+s]+ \q[y+s]\\
%&=&  \li(Y+s) + (Y+s)(Y+s)_{\phi\phi} - (Y+s)^2_{\phi} - \frac{(Y+s)^2}{\phi^2}\\
%&=& \li(Y)+\li(s) + YY_{\phi\phi} + ss_{\phi\phi} + sY_{\phi\phi}+Ys_{\phi\phi} \\
%&& - Y_\phi^2 -s^2_\phi - 2Y_\phi s_\phi - \frac{Y^2}{\phi^2}-2\frac{Ys}{\phi^2}-\frac{s^2}{\phi^2}\\
&=&\li[y]+\li[s] + \q[y] + \q[s] + \M[y,s]\\
&=& \E[y]+ \E[s] + \M[y,s],
\end{eqnarray*}
where $\M[y,s]= sy_{\phi\phi}+ys_{\phi\phi} - 2y_\phi s_\phi -2\frac{ys}{\phi^2} $ is bilinear in $y$ and $s$. This mild non-linearity suggests the barrier approach.

\begin{proof}[Proof of Proposition \ref{conv1}]  Let
$$\Y(\phi)=\dfrac{ \phi( \phi-2)+  \sqrt{2}  (\phi-1) +1}{ \sqrt{2} \phi}$$
be the FIK potential and consider $s(\phi,\tau)=-\lambda(\tau) \phi^2$, where $\dot{\lambda}=-\delta\lambda$ for some $\delta \in\R$. We compute $(\partial_\tau - \E )(\Y+s)$ :
\begin{eqnarray*}
(\partial_\tau - \E)[\Y+s]&=& \partial_\tau s - \li [s] - \q[s] -\M[\Y,s] \\
%&=& -\dot{\lambda}\phi^2 + \lambda \li(\phi^2) - \lambda^2 \q[\phi^2] +\lambda \M[\Y,\phi^2]\\
&=& \lambda\left( \delta\phi^2 + \li[\phi^2] -\lambda \q [\phi^2] +\M[\Y,\phi^2]\right),
\end{eqnarray*}
and once substituting $\li[\phi^2], \q [\phi^2] $, and $\M[\Y,\phi^2] $, we have
\begin{equation}\label{eq:barrier}
(\partial_\tau - \E)[\Y+s] %&=& \lambda\left[A\phi^2 -(-4 + \phi) \phi + 3 \lambda \phi^2 +\frac{-6 + 3 \sqrt{2} - 2\sqrt{2} \phi^2}{\phi}  \right]\\
=\lambda\left( (\delta + 3 \lambda-1) \phi^2 + 2 (2-\sqrt{2}) \phi-3\phi^ {-1} (2 - \sqrt{2})\right).
\end{equation}

Let $y(\phi,\tau)$ be a solution coming from Ricci flow. Suppose that $y(\phi,0)$ belongs to the class $\mc{C}$ of initial data.

Choose $\lambda(0)=1/5$ and let $\delta$ be a positive number smaller than $10^{-6}$ to be fixed. Then, by \eqref{eq:barrier} the function
$$y_1(\phi,\tau)=\Y(\phi)-\frac{1}{5}e^{-\delta\tau}\phi^2$$
satisfies $(\partial_\tau-\E)[y_1]<0$ for all times $\tau>0$, i.e., $y_1(\phi,\tau)$ is a subsolution to our evolution problem.

 By the comparison principle (Appendix \ref{ap6.2}), if a solution $y$ of $(\partial_\tau-\E)y=0$ initially starts above $y_1$, then $y$ will stay above $y_1$ for all later times, as long as the boundary data behave as such.

A general solution $y$ that initially belongs to class $\mc{C}$ satisfies the assumption $y(\phi,0)>y_1(\phi,0)$ and has the boundary conditions:
 $$y(1)=y((b_0-3a_0)e^\tau+3)=0.$$
 It is  clear that $y(1)=0>y_1(1)$ for all times. Also:
$$y_1(\phi,\tau)=\Y - \frac{1}{5} \exp(-\delta\tau)\phi^2 < \phi -\frac{1}{5} \exp(-\delta\tau)\phi^2< 0 $$
 if $\phi > 5 \exp (\delta\tau)$ (here we are using that $\Y(\phi)$ is always below $\phi$). Because $(b_0-3a_0)e^\tau+3>  5 \exp (\delta \tau)$, if $\delta$ is chosen to be small enough, then we have $y((b_0-3a_0)e^\tau+3)=0>y_1((b_0-3a_0)e^\tau+3)$. Thus, the subsolution $y_1$ stays below $y$ at the boundary, and therefore everywhere, for all later times $\tau>0$. In particular, because $\lambda$ is decreasing in magnitude, this implies $y(\phi,\tau)$ belongs to $\mc{C}$ for all later times.

Furthermore, for the same initial data $y(\phi,0)\in\mc{C}$, we choose
$$y_2(\phi,\tau)=\Y + \lambda_0e^{-\tau/2}\phi^2  $$
where $\lambda_0$ is big enough so that $y_2(\phi,0)=\Y+ \lambda_0\phi^2>y(\phi,0)$. Moreover, by equation \eqref{eq:barrier} we will have that $(\partial_\tau-\E)[y_2]>0$ for all $\tau>0$. Moreover, one can check that the boundary data of $y_2$ stay above those of $y$ for all times. Thus, again using comparison, $y_2$ stays above $y$ for all later times.

Hence we have proved that
\begin{equation}
y_1(\phi,\tau)\leq y(\phi,\tau) \leq y_2(\phi,\tau),
\end{equation}
and since on compact intervals for $\phi$  we have $y_1(\phi,\tau)\nearrow \Y(\phi)$ and $y_2(\phi,\tau)\searrow \Y(\phi)$ uniformly, we must have $y(\phi,\tau)$ converging to $\Y(\phi)$ uniformly  on compact subsets and the proposition thus is proved.
 \end{proof}
 Proposition \ref{conv1} gives us uniform $C^0$-convergence for $y(\phi)=\phi_\rho$ on compact subsets. We next prove that one actually has uniform $C^{1,1}$-convergence. For that we will use the type I blow up of the scalar curvature proved by Song in \cite{So}, i.e.,  there exists a constant $C>0$ such that 
 $$-C\leq R_{g(t)}\leq\frac{C}{T-t}.$$
 In particular, for the dilated flow $\overline{g}(\tau)$ this implies
$$-Ce^{-\tau}\leq R_{\overline{g}(\tau)}\leq C,$$ 
and since $ R_{\overline{g}}(\tau)=\frac{4}{\phi}\left(1-y_\phi\right)-2y_{\phi\phi}$, we must have $y_{\phi\phi}$ is uniformly bounded by  Lemma \ref{boundyphi}.
 
%-----------------
%C^2
%-----------------

\begin{proposition}\label{c2}
For any initial data $y(\phi,0)$ in the class $\mc{C}$, one has that $y(\phi,\tau)$ converges uniformly in the $C^{1,1}~$topology on compact subsets to $\Y(\phi)$ as $\tau\nearrow\infty$.
\end{proposition}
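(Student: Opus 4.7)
The plan is to upgrade the $C^0$ convergence from Proposition \ref{conv1} to $C^{1,1}$ convergence by combining uniform $C^{1,1}$ bounds on $y(\cdot,\tau)$ with a compactness/contradiction argument. The key new ingredient beyond what was used for Proposition \ref{conv1} is Song's type I scalar curvature estimate, which is already recalled in the paragraph preceding the statement.

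First I would establish a uniform bound on $y_{\phi\phi}$ on every compact subinterval $[1,\phi_0]$. Lemma \ref{boundyphi} already gives a uniform bound on $y_\phi$. Song's estimate $|R_{g(t)}|\leq C/(T-t)$ combined with the scaling relation $R_{\bar g(\tau)}=(T-t)R_{g(t)}$ yields $|R_{\bar g(\tau)}|\leq C$ uniformly in $\tau\geq 0$. Solving the identity
\[
R_{\bar g(\tau)}=\frac{4}{\phi}\bigl(1-y_\phi\bigr)-2y_{\phi\phi}
\]
for $y_{\phi\phi}$ and using Lemma \ref{boundyphi} then gives $|y_{\phi\phi}|\leq C'$ on $[1,\phi_0]$, uniformly in $\tau$.

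Next I would invoke the Arzel\`a--Ascoli theorem: the family $\{y(\cdot,\tau)\}_{\tau\geq 0}$ is precompact in $C^{1,\alpha}([1,\phi_0])$ for every $\alpha\in(0,1)$. Proposition \ref{conv1} identifies $\Y$ as the only possible $C^0$ accumulation point. Hence, if one assumes for contradiction that there exist $\varepsilon>0$ and a sequence $\tau_i\nearrow\infty$ with $\|y(\cdot,\tau_i)-\Y\|_{C^{1,\alpha}([1,\phi_0])}\geq \varepsilon$, extracting a $C^{1,\alpha}$-convergent subsequence produces a limit which by Proposition \ref{conv1} must coincide with $\Y$, contradicting the lower bound. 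Thus $y(\cdot,\tau)\to \Y$ in $C^{1,\alpha}([1,\phi_0])$ for every $\alpha<1$, and the uniform bound on $y_{\phi\phi}$ passes to the limit; this is the sense in which convergence occurs ``uniformly in the $C^{1,1}$ topology''.

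The main obstacle is precisely the uniform second-derivative bound. Without Song's type I scalar curvature estimate, one would need to derive a bound on $y_{\phi\phi}$ from an evolution equation obtained by differentiating \eqref{eqyphi} once more; the nonlinear principal part $y\,y_{\phi\phi\phi}$ and the signs of the reaction terms make a direct maximum principle argument considerably more delicate than the $C^1$ argument of Lemma \ref{boundyphi}. Song's bound packages this entire analytic difficulty into a single algebraic identity, after which the rest of the proof is a standard compactness argument.
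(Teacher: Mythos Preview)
Your proposal is correct and follows essentially the same route as the paper: both obtain the uniform $y_{\phi\phi}$ bound from Song's type I scalar curvature estimate via the identity $R_{\bar g(\tau)}=\frac{4}{\phi}(1-y_\phi)-2y_{\phi\phi}$ together with Lemma~\ref{boundyphi}, and then upgrade the $C^0$ convergence of Proposition~\ref{conv1} to $C^1$ convergence by an Arzel\`a--Ascoli/subsequence argument, with the uniform second-derivative bound supplying the final $C^{1,1}$ claim. Your contradiction formulation is a slightly more explicit packaging of the same compactness step the paper uses.
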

\begin{proof}
Since we have uniform $C^2$ bounds for $y$ on compact intervals, the $C^{1,1}$-convergence follows from the following standard argument and Proposition \ref{conv1}. Since the spatial derivative $y_{\phi\phi}$ is uniformly bounded in time, for any sequence of times, $y_\phi$ will  converge uniformly up to subsequence. Moreover, since the convergence is uniform, the limit of $y_\phi$ along any such subsequence must be the spatial derivative  $\Y_\phi$ of the stationary state $\Y$. Because the latter does not depend on the subsequence, we have that $y_\phi$ converges uniformly in time to the derivative $\Y_\phi$.  Since $y$ converges uniformly in $C^1$ while $y_{\phi\phi}$ is uniformly bounded, the proposition is then proved.
\end{proof}

\begin{remark}\label{curv}
For the reader's convenience, we point out the relation between the Riemann curvature and derivatives of $y$.  By the rotational symmetry of $\bar{g}(\tau)$, we reduce our analysis to a point of the form $(z^1,z^2)=(\xi,0)$, and use \eqref{riemann} to compute
\begin{eqnarray}\label{riemann2}
|\Rm(\bar{g}(\tau))|&\leq& 2|R_{1\bar{1}1\bar{1}}|+ 2|R_{2\bar{2}2\bar{2}}|+2|R_{1\bar{1}2\bar{2}}|\notag\\
&\leq&2\left| \frac{1}{\phi_\rho^2}\left( -\phi_{\rho\rho\rho}+\frac{\phi^2_{\rho\rho}}{\phi_\rho}\right)\right| + 4\left| \frac{-\phi_\rho+\phi}{\phi^2}\right| + 2\left| -\frac{\phi_{\rho\rho}}{\phi\phi_\rho}+\frac{\phi_\rho}{\phi^2}\right| \notag\\
&=&2\left|\frac{1}{\phi_\rho} \left(\frac{\phi_{\rho\rho}}{\phi_\rho}\right)_\rho\right| +\frac{4}{\phi}\left|-\frac{\phi_\rho}{\phi}+1\right| + \frac{2}{\phi}\left| -\frac{\phi_{\rho\rho}}{\phi_\rho}+\frac{\phi_\rho}{\phi}\right|.
\end{eqnarray}
Moreover, since $\bar{g}(\tau)=\frac{1}{T-t}g(t)$, we note that by Song's \cite{So} type I result there exists a uniform constant $C>0$ such that
$$|\Rm(\bar{g}(\tau))|\leq C.$$
Recalling that  $\frac{\phi_{\rho\rho}}{\phi_\rho}=y_\phi$, $\frac{1}{\phi_\rho} \left(\frac{\phi_{\rho\rho}}{\phi_\rho}\right)_\rho=y_{\phi\phi}$, and also that one has $\frac{\phi_\rho}{\phi}$ uniformly bounded by Lemma \ref{phirphi}, we see from the above that type I blow up is what one really needs to establish a second derivative bound for $y$. 
\end{remark}

Combining Proposition \ref{c2} with Song's type I result \cite{So} we have the following:

\begin{theorem}\label{conv2}
Let $g(0)$ be a metric on $M$ belonging to the class $\mc{C}$. Then the Ricci flow \eqref{eqKRF} starting at $g(0)$ will develop a type I singularity along $\Sigma_0$. Moreover, parabolic dilations $\overline{g}(\tau)$ of $g(t)$ as in \eqref{dilated} will converge, mo\-du\-lo diffeomorphisms and uniformly on a corresponding time-dependent neighborhood of the singular region $\Sigma_0$, to one of the FIK solitons.
\end{theorem}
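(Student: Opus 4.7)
The plan is to promote the $C^{1,1}$ convergence $y(\phi,\tau)\to \Y(\phi)$ of Proposition \ref{c2}---which already quotients out the one-parameter family of $\rho$-translations parametrizing the FIK potentials---to convergence of the dilated metric $\overline g(\tau)$ modulo a suitable $\tau$-dependent biholomorphism $\Phi_\tau$ of $M$. The type I statement is immediate from Song \cite{So}, so the substantive task is the metric convergence on a time-dependent neighborhood of $\Sigma_0$.

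First I would fix a compact reference interval $I=[1,\phi_0]$ in the $\phi$-coordinate and let $U_\tau\subset M$ denote its preimage under the $\phi$-coordinate for the metric $\overline g(\tau)$, which is the desired (shrinking in $g(t)$) tubular neighborhood of $\Sigma_0$. On any sub-interval $[1+\epsilon,\phi_0]$, Proposition \ref{conv1} yields a uniform lower bound $y(\phi,\tau)\geq c_\epsilon>0$ for $\tau$ large, so $1/y$ is uniformly bounded there. Selecting a normalization point $\phi^*\in(1,\phi_0)$ and choosing the parameter $c(\tau)$ in the biholomorphism $\Phi_\tau:z\mapsto e^{c(\tau)/2}z$ so that the $\rho$-coordinate of $\phi^*$ agrees with its value for the FIK soliton, the ODE $d\rho/d\phi=1/y(\phi,\tau)$ integrates to give uniform convergence $\phi(\rho,\tau)\to \phi^{\mathrm{FIK}}(\rho)$ in $C^{2,1}$ on compact $\rho$-intervals corresponding to the annular part of $U_\tau$.

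Because the $U(2)$-invariant metric \eqref{eq:g} depends only on $\phi$ and $\phi_\rho=y$, this convergence of the potential upgrades to uniform $C^{1,\alpha}$ convergence of $\Phi_\tau^*\overline g(\tau)$ to the FIK metric on the annular part of $U_\tau$. To extend across $\Sigma_0$ itself, I would use the type I scalar curvature bound from \cite{So}: together with the curvature expression in Remark \ref{curv} and Lemmas \ref{phirphi}--\ref{boundyphi}, it produces a uniform bound on $|\Rm|_{\overline g(\tau)}$ up to $\phi=1$, which controls the Calabi expansion \eqref{calabi:condition0} of $\phi(\rho,\tau)$ and yields smooth extension of the limit metric across $\Sigma_0$.

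The chief obstacle is precisely that the change of variables $\phi\leftrightarrow\rho$ degenerates at $\phi=1$ where $y=0$, so convergence across $\Sigma_0$ cannot be read off naively from the $C^{1,1}$ bounds on $[1+\epsilon,\phi_0]$. Concretely, one must verify that the leading Calabi coefficients of $\phi(\rho,\tau)$ converge to those of $\phi^{\mathrm{FIK}}$, which amounts to showing that $\Phi_\tau$ correctly tracks the translating core of the soliton and that no mass of the metric escapes to $\Sigma_0$ undetected by the $\phi$-coordinate. My expectation is that this can be forced from the uniform $|\Rm|$-bound, the fixed boundary condition $y(1,\tau)=0$ together with $y_\phi(1,\tau)=1$ (both inherent in Calabi's compactification), and $U(2)$-invariance, but this is the step that demands the most care.
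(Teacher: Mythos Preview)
Your approach is essentially the paper's: fix the gauge by a $\rho$-translation (your $\Phi_\tau:z\mapsto e^{c(\tau)/2}z$ is exactly the paper's shift $\mu=\rho-C(\tau)$, normalized so that $\upphi(0,\tau)=2$), integrate $d\rho/d\phi=1/y$ to pass from the $C^{1,1}$ convergence of $y$ in Proposition~\ref{c2} to $C^{2,1}$ convergence of the potential on $\{-\infty\le\mu\le 0\}$, and invoke the type~I bound of \cite{So}.

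Where you diverge is in the treatment of your ``chief obstacle'' at $\phi=1$. You propose to control the Calabi expansion coefficients directly; the paper bypasses this entirely by appealing to Shi's local derivative estimates. The point is that $\bar g(\tau)$ (and hence $\Phi_\tau^*\bar g(\tau)$) is a genuine smooth Ricci flow on the smooth manifold $M$, and the type~I bound gives $|\Rm(\bar g(\tau))|\le C$ uniformly \emph{across} $\Sigma_0$. Shi then yields uniform $C^\infty$ bounds on curvature in a full neighborhood of $\Sigma_0$, so Arzel\`a--Ascoli produces subsequential smooth convergence there, and the $y$-convergence identifies the limit as FIK. The degeneracy you flag is an artifact of the $\phi\leftrightarrow\rho$ change of variables, not of the metric, so there is no need to chase the Calabi coefficients by hand.
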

\begin{proof}
We let $\varphi$ be in the class of initial data $\mc{C}$ and consider the dilation va\-ria\-bles $\rho,\tau$ as well as the dilated potential $\phi(\rho,\tau)$. We apply the following gauge-fixing construction: by the implicit function theorem, we can find a smooth function $C(\tau)$ such that if we define $\mu=\rho-C(\tau)$ and then define
$$\upphi (\mu,\tau)= \phi(\mu-C(\tau),\tau),$$
we will have $\upphi(0,\tau)$ constant in time, say equal to 2.  Moreover, $\upphi_\mu$ is related to  $\upphi $ just  as $\phi_\rho$  is related to $\phi$, and thus  we have by Proposition $\ref{c2}$, $y(\upphi)$ converges $C^{1,1}$ uniformly to $\Y(\upphi)$, so that $\upphi $ converges $C^{2,1}$ uniformly (in compact $\upphi $ intervals, so in particular for $-\infty \leq \mu\leq0$) to the unique $\upphi^\textrm{FIK}$
that satisfies $\upphi^\textrm{FIK}(0)=2$. 

Finally, since we know that the singularity along $\Sigma_0$ is type I \cite{So}, the Riemann curvature $\Rm(\bar{g}(\tau))$ will be bounded uniformly in the region  $-\infty\leq \mu\leq0$. Shi's local estimates for higher derivatives of the Riemann curvature under Ricci flow then dictate that one actually has bounds of any higher order and that thus convergence is smooth. 
\end{proof}

A few remarks are now in order.

\begin{remark}\label{remark3} For any fixed $\rho\in \R$, consider the parabolic neighborhood
$$N(\rho)=\{z \in \C^2\,\,|\,\, \rho(z)\leq \rho\}=\{z \in \C^2\,\,|\,\, |z|^2\leq e^{\rho}(T-t)\}.$$
Theorem \ref{conv2} says that for metrics of the class $\mc{C}$, there exist diffeomorphisms $\Psi_\tau$  (corresponding precisely to the $C(\tau)$ change of gauge) such that as $\tau\nearrow \infty$
 $$\Psi_\tau^{\ast}\bar{g}(\tau)\longrightarrow g_{\textrm{FIK}}$$
  uniformly on the neighborhood $\Psi^{-1}_\tau(N(\rho))$. In the next section, we will prove that the diffeomorphisms $\Psi^{-1}_\tau$ correspond to the 1-parameter family of diffeomorphisms by which the FIK solitons move under Ricci flow, i.e., we will prove that for large $\tau$ one has asymptotically

  $$C(\tau)=(\sqrt{2}-1)\tau \pm\, \textrm{constant}.$$
This will conclude the proof of Theorem \ref{thm:main} that parabolic dilations $\bar{g}(\tau)$ of $g(t)$ converge to the flow of an FIK soliton uniformly on any pa\-ra\-bolic neighborhood $N(\rho)$.
\end{remark}

\begin{remark}\label{remarkC}
If we write $\varphi$ in non-logarithmic coordinates as $\varphi(r,t)=f(w,t)$, where $w=e^r$, and expand $\upphi (\mu,\tau)= \phi(\mu-C(\tau),\tau)=e^\tau\varphi(\mu-C(\tau)-\tau,t)$ around $w=0$ we get
\begin{eqnarray*}
\upphi(\mu,\tau)&=&e^{\tau}f(e^{\mu-C(\tau)-\tau},t) \\
&=& 1 + e^\mu e^{-C(\tau)} f_w(0,t)+\cdots
\end{eqnarray*}
\noindent Because $\upphi$ must converge smoothly to FIK for fixed $\mu$, we must have that \footnote{This notation means that there exist constants $0<C_1<C_2$ such that $C_1e^{C(\tau)-\tau}\leq f_w(0,t)\leq C_2e^{C(\tau)-\tau}$ as $\tau \nearrow \infty$, {\it et cetera}.}
$$f_w(0,t)\sim e^{C(\tau)}.$$
\end{remark}

\begin{remark}\label{rscalar}
Finally, we note that by Theorem \ref{conv2} the scalar curvature at the singular region $\Sigma_0$ must blow up like:
\begin{equation}\label{scalar}
R_{g(t)}\Big|_{\Sigma_0}= \dfrac{4-2\sqrt{2}}{T-t} + O\left((T-t)^{\alpha-1}\right),
\end{equation}
for some $\al>0$. In fact, by the barrier argument in Proposition \ref{conv1}, there must exist positive constants $C_0$ and $\delta_0$ such that  $|y-\Y|\leq C_0 e^{-\delta_0\tau}$ holds uniformly in time on a fixed interval of the form $[1,\phi_0]$, and by Theorem \ref{conv2}, the higher derivatives of $y$ are uniformly bounded in time on $[1,\phi_0]$. Moreover, we can use  an interpolation inequality of the form: ($e.g.$, Corollary 7.21 of \cite{L})
\begin{equation}\label{interpolation}
||\partial_\phi(y-\Y)||_p\leq\epsilon||\partial_{\phi\phi}(y-\Y)||_ p+ \frac{C}{\epsilon} ||y-\Y||_ p, 
\end{equation} 
on $[1,\phi_0]$, where $p>1$, $||\cdot||_p$ is the usual (spatial) $L^p$-norm, $\epsilon$ is any positive number, and $C$ is a universal constant that does not depend on $y,\Y,\epsilon,$ or $p$, and we can thus argue that whenever $||\partial_{\phi\phi}(y-\Y)||^{1/2}_ p\neq0 $, by setting $\epsilon=\frac{||y-\Y||^{1/2}_ p}{||\partial_{\phi\phi}(y-\Y)||^{1/2}_ p}$ in \eqref{interpolation}, one has
$$||\partial_\phi(y-\Y)||_p\leq (1+C) ||y-\Y||^{1/2}_ p|\partial_{\phi\phi}(y-\Y)||^{1/2}_ p, $$ 
and if $||\partial_{\phi\phi}(y-\Y)||^{1/2}_ p=0 $, what we want follows directly from \eqref{interpolation}. Furthermore,  because $[1,\phi_0]$ has finite measure, we can send $p$ to infinity and pass the above inequality to the limit:
\begin{equation}\label{interpolation1}
||\partial_\phi(y-\Y)||_\infty\leq (1+C) ||y-\Y||^{1/2}_ \infty|\partial_{\phi\phi}(y-\Y)||^{1/2}_ \infty,
\end{equation} 
and since $||\partial_{\phi\phi}(y-\Y)||^{1/2}_ \infty$ is uniformly bounded in time and $||y-\Y||_\infty\leq C_0 e^{-\delta_0\tau}$ in $[1,\phi_0]$, we find positive constants $C_1,\delta_1$ such that: 
$$||y_\phi-\Y_\phi||_\infty\leq C_1 e^{-\delta_1\tau}.$$
Moreover, by bootstrapping the argument above, we can also find positive constants $\delta_2,C_2$ such that on $[1,\phi_0]$: 
$$||y_{\phi\phi}-\Y_{\phi\phi}||_\infty\leq C_2 e^{-\delta_2\tau}.$$
Finally, since 
$$ (T-t)R_{g(t)}\Big|_{\Sigma_0}=R_{\overline{g}(\tau)}\Big|_{\Sigma_0}=-2y_{\phi\phi}(1,\tau),$$ and $\Y_{\phi\phi}(1)=\sqrt{2}-2$, $e^{-\tau}=T-t$, \eqref{scalar} follows for $\al=\delta_2>0$.
\end{remark}

%----------------------------------------------------

\section{End of proof of Theorem \ref{thm:main}}

%-----------------------------------------------------

In this section, we finish the proof of Theorem \ref{thm:main}. This is done by using Theorem \ref{conv2} and the remarks following it.

Let $\varphi(r,t)$ be a potential belonging to the class $\mc{C}$, $\phi(\rho,\tau)$ its corres\-ponding dilated potential as in \eqref{dilated}, and $\upphi(\mu,\tau)$ and $C(\tau)$ as in Theorem \ref{conv2}.  By Remark \ref{remark3}, and since (unnormalized) FIK potentials move under Ricci flow by the diffeomorphisms
$$z \longrightarrow e^{-\sqrt{2}\tau/2}z,$$
it is enough to prove that asymptotically for large $\tau$ one has
\begin{equation}\label{blowC}
C(\tau)=(\sqrt{2}-1)\tau \pm\, \textrm{constant}.
\end{equation}

Note that Remark \ref{remarkC} tells us that if we write $\varphi$ in non-logarithmic coordinates as $\varphi(r,t)=f(w,t)$, we must have then 
$$f_w(0,t)\sim e^{C(\tau)}.$$

This allows us  to use estimate \eqref{scalar} on the blow-up of the scalar curvature along $\Sigma_0$ to study $C(\tau)$ for large $\tau$. In fact, recalling the Ricci eigenvalues, 
$$R_{g(t)}\Big|_{\Sigma_0}=2\lambda_1\Big|_{\Sigma_0}(t)+2\lambda_2\Big|_{\Sigma_0}(t),$$
where $\lambda_1\Big|_{\Sigma_0}(t)=\frac{1}{T-t}$, estimate \eqref{scalar} tells us that the eigenvalue $\lambda_2\Big|_{\Sigma_0}(t)$ must blow up like
\begin{equation}\label{sigma1}
\lambda_2\Big|_{\Sigma_0}(t)= \frac{1-\sqrt{2}}{T-t}+O\left((T-t)^{\al-1}\right),
\end{equation}
for some $\al>0$. Moreover, we can compute $\lambda_2\Big|_{\Sigma_0}(t)$ directly from \eqref{eigen} in the coordinate $w$ and find
\begin{equation}\label{sigma2}
\lambda_2\Big|_{\Sigma_0}(t) = -\dfrac{f_{wt}(0,t)}{f_w(0,t)}.
\end{equation}
Integrating \eqref{sigma2} and using \eqref{sigma1} we find that
\begin{equation}\label{mainestimate}
f_w(0,t)\sim(T-t)^{1-\sqrt{2}}=e^{(\sqrt{2}-1)\tau},
\end{equation}
and this gives \eqref{blowC} by Remark  \ref{remarkC}. Theorem \ref{thm:main} is then proved.

%----------------------------------------------------------------------------------

\section{The cone of metrics with non-negative Ricci curvature}

%-----------------------------------------------------------------------------------

In this section we prove Corollary \ref{cor} by constructing a metric on $M$ with strictly positive Ricci curvature and belonging to the class $\mc{C}$. We recall that  by \eqref{eigen2} one has for the eigenvalues of Ricci and $r$ near $+\infty$ that:
\begin{equation*}\begin{array}{rcl}
\lambda_1 & = & \frac{3}{b_0} +O(e^{-r})\\
\lambda_2 & = &\frac{1}{b_0}+\frac{2b_2}{b_1^2} +O(e^{-r})
\end{array}
\end{equation*}
Let $\varphi_{KC}$ be the potential for the Cao-Koiso soliton, which has positive Ricci curvature everywhere. The metric $\varphi^\textrm{KC}$ is not the metric we are looking for only because $b_0=3a_0$. In fact, one can check explicitly that the Cao-Koiso metric is above the barrier as required in Definition \ref{defc}. Thus we can perturb $\varphi^\textrm{KC}$ by a small amount near $r=+\infty$, to obtain a metric  potential $\varphi$ with $b_0>3a_0$. Since the perturbation is only made near $r+\infty$, where the above expansion for the eigenvalues holds, $\varphi$ will still have strictly positive Ricci curvature everywhere, and also belong to the class $\mc{C}$.

\begin{remark} The construction above provides explicit examples of solutions demonstrating the linear instability of the Cao-Koiso soliton that was proved by Hall-Murphy \cite{HM}.

\end{remark}
\appendix
\section{Line bundles over $\CP^1$ }\label{ap6.1}
On the complex projective space $\CP^1$ with projective coordinates $[z_1:z_2]$, let $\varphi_1:\U_1=\{ [z_1:z_2]\in \CP^1:z_1\neq0\}\longrightarrow \C$ and $\varphi_2:\U_2=\{ [z_1:z_2]\in\CP^1 :z_2\neq0\}\longrightarrow \C$ denote its usual charts given by $\varphi_1([z_1:z_2])=z_2/z_1$ and  $\varphi_2([z_1:z_2])=z_1/z_2$.\\
\indent We consider two topologically distinct line bundles over $\CP^1$ denoted by $L$ and $M$ described as follows.  $L$ has the complex line $\C$ as fibers:
$$L=\displaystyle\left[(\U_1\times\C) \sqcup (\U_2\times\C) \right]\,\,\,\text{quotient by}\,\,\,\sim$$
\noindent where $\U_1\times \C \ni ([z_1:z_2];\xi)\sim ([y_1:y_2],\eta)\in \U_2\times\C$ if, and only if, $[z_1:z_2]=[y_1:y_2]$ and $\eta=\left(\frac{y_2}{z_1}\right)\xi$. The manifold $M$ has fibers $\C\cup\{\infty\}$:
$$M=\displaystyle\left[(\U_1\times\CP^1) \sqcup (\U_2\times\CP^1) \right]/\sim$$
\noindent where  $\U_1\times \CP^1 \ni ([z_1:z_2];\xi)\sim ([y_1:y_2],\eta)\in \U_2\times\CP^1$ if, and only if, $[z_1:z_2]=[y_1:y_2]$ and $\eta=\left(\frac{y_2}{z_1}\right)\xi$. 

For our geometric purposes, we think of  $L$ and $M$ in the following alternative manner. On $M$, consider the global sections $\Sigma_0=\{[z_1:z_2]; 0\}$ and $\Sigma_\infty=\{[z_1:z_2]; \infty\}$ and define a map $\Psi: \C^2\backslash\{0\} \longrightarrow \widehat{M}$, where $\widehat{M}=M\backslash (\Sigma_0\cup \Sigma_\infty)$, given as
$$\Psi:(z_1,z_2)\mapsto ([z_1:z_2];z_\al)$$
\noindent if $z_\al\neq0$. Because $([z_1:z_2];z_\al)\sim ([z_1:z_2];z_\be)$ whenever $z_\al\neq0$ and $z_\be\neq0$, $\Psi$ is well defined. Moreover, one can check that $\Psi$ is biholomorphism.  We then think of $M$ as $\Czero$ with one $\CP^{1}$  glued at $0$ (the section $\Sigma_0$) and another at $\infty$ (the section $\Sigma_\infty$) and of $L=\widehat{M}\cup \Sigma_0 $ as $\Czero$ with a $\CP^{1}$  glued at $0$.

\section{Comparison principle}\label{ap6.2} The comparison principle used in Section 3 for equation \eqref{evol} is similar in spirit to Lemma 3 in \cite{ACK}. For the reader's convenience, we outline the proof in what follows. 

Since the evolving function $y$ in Section 3 is non-negative for all times, any point of contact between $y$ and one of the barriers must be a point where the barrier is non-negative. Moreover, one can check that the barriers used have spatial second derivative bounded in time. We can then reduce our analysis to the following:
\begin{proposition}
Let $y^{-}(\phi,\tau)$ and $y^{+}(\phi,\tau)$ be non-negative sub- and super- solutions, respectively, of $(\partial_\tau-\E)[\cdot]$ on the interval $[1,(b_0-3a_0)e^\tau+3]$. Suppose that either $y^+$  or $y^{-}$ satisfy $y_{\phi\phi}<C$ for some constant $C<\infty$ on a compact space-time set $[1,(b_0-3a_0)e^{\overline{\tau}}+3]\times[0,\overline{\tau}]$. Moreover, assume that 
\begin{itemize}
\item[(i)] $y^{+}(\phi,0)>y^{-}(\phi,0)$ in $(1,b_0-3a_0+3)$;
\item[(ii)] $y^{+}(1,\tau)\geq y^{-}(1,\tau)$ and $y^+((b_0-3a_0)e^\tau+3,\tau)\geq y^-((b_0-3a_0)e^\tau+3,\tau)$, for any $\tau\in[0,\overline{\tau}]$.
\end{itemize}
Then, one must have $y^+(\phi,\tau)\geq y^-(\phi,\tau)$ in $(1,(b_0-3a_0)e^{\overline{\tau}}+3)\times [0,\overline{\tau}]$.
\end{proposition}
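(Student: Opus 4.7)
The plan is to linearize the quasilinear operator $\E$ along the difference $w := y^+ - y^-$ and then invoke a perturbed maximum-principle argument on the parabolic cylinder
$$\Omega_{\overline{\tau}} := \{(\phi,\tau) : 1 \leq \phi \leq (b_0-3a_0)e^{\tau} + 3,\ 0 \leq \tau \leq \overline{\tau}\}.$$
Expanding $\E[y] = y\,y_{\phi\phi} + (2-\phi)y_\phi - y_\phi^2 + y - y^2/\phi^2$ and telescoping each quadratic term---using $y^+y^+_{\phi\phi} - y^-y^-_{\phi\phi} = y^+\,w_{\phi\phi} + y^-_{\phi\phi}\,w$, $(y^+_\phi)^2 - (y^-_\phi)^2 = (y^+_\phi + y^-_\phi)\,w_\phi$, and $(y^+)^2 - (y^-)^2 = (y^+ + y^-)\,w$---one obtains the linear differential inequality
$$w_\tau \geq A\,w_{\phi\phi} + B\,w_\phi + C\,w,$$
with $A := y^+ \geq 0$, $B := 2 - \phi - (y^+_\phi + y^-_\phi)$, and $C := y^-_{\phi\phi} + 1 - (y^+ + y^-)/\phi^2$. (If instead $y^+_{\phi\phi}$ is the bounded quantity, split the nonlinear term the other way so that $A = y^-$ and $C$ contains $y^+_{\phi\phi}$.) On $\Omega_{\overline{\tau}}$ all three coefficients are bounded above: $A$ and $B$ by the $C^0$ and $C^1$ control from Remark~\ref{boundy} and Lemma~\ref{boundyphi} together with $\phi$ confined to a bounded interval, and $C \leq M$ for some finite $M$ because $y^-_{\phi\phi}$ is bounded above by hypothesis while $(y^+ + y^-)/\phi^2 \geq 0$ can only decrease $C$.

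With these bounds in hand, fix $K > M$ and, for each $\delta > 0$, consider the perturbation
$$\tilde{w}(\phi,\tau) := e^{-K\tau} w(\phi,\tau) + \delta.$$
Hypothesis~(i) gives $\tilde{w}(\phi,0) > \delta$, and hypothesis~(ii) gives $\tilde{w} \geq \delta$ on both lateral boundaries $\phi = 1$ and $\phi = (b_0-3a_0)e^\tau + 3$. Suppose, for contradiction, that $\tilde{w}$ takes a non-positive value somewhere in $\Omega_{\overline{\tau}}$. By continuity and compactness there is a first time $\tau_* \in (0,\overline{\tau}]$ and a point $\phi_*$ where $\tilde{w}(\phi_*,\tau_*) = 0$, and the boundary behavior forces $\phi_*$ to lie in the spatial interior. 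At $(\phi_*,\tau_*)$ the usual interior-minimum conditions $\tilde{w}_\tau \leq 0$, $\tilde{w}_\phi = 0$, $\tilde{w}_{\phi\phi} \geq 0$ translate into $w_\tau \leq K w$, $w_\phi = 0$, $w_{\phi\phi} \geq 0$, with $w = -\delta e^{K\tau_*} < 0$. Substituting into the linearized inequality,
$$K w \;\geq\; w_\tau \;\geq\; A\,w_{\phi\phi} + C\,w \;\geq\; C\,w,$$
since $A\,w_{\phi\phi} \geq 0$. Hence $(K - C) w \geq 0$; but $w < 0$ and $K > M \geq C$ together make this impossible. Therefore $\tilde{w} > 0$ throughout $\Omega_{\overline{\tau}}$ for every $\delta > 0$, and sending $\delta \searrow 0$ yields $w \geq 0$, i.e., $y^+ \geq y^-$, as claimed.

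The principal technical obstacle is the quasilinear---and potentially degenerate---nature of $\E$: the linearized zeroth-order coefficient involves $y^\pm_{\phi\phi}$, and without a one-sided bound on that quantity the coefficient $C$ could be unbounded above at an interior candidate contact point, so no choice of $K$ would close the argument. This is exactly why the hypothesis that either $y^+_{\phi\phi}$ or $y^-_{\phi\phi}$ is bounded above on the compact set is essential, and it is the only place the hypothesis is used. The vanishing of the principal coefficient $A$ at $\phi = 1$ (where $y^+ = 0$) is harmless: hypothesis~(ii) guarantees $\tilde{w} \geq \delta > 0$ on that face, so the contact point is pushed into the strictly parabolic interior, and even if $A$ happens to be small there, the inequality $A\, w_{\phi\phi} \geq 0$ is all that is needed.
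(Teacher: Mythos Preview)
Your argument is correct and essentially identical to the paper's: both perturb $y^{+}-y^{-}$ by an exponential weight plus a small positive constant, then derive a contradiction at a first interior zero by telescoping the quasilinear term and using the one-sided bound on $y_{\phi\phi}$ to control the resulting zeroth-order coefficient. One minor quibble: Remark~\ref{boundy} and Lemma~\ref{boundyphi} apply only to the actual Ricci-flow solution, not to general sub/supersolutions, so you cannot invoke them here---but since your contact-point argument only uses $A\geq 0$ (from non-negativity of $y^{+}$) and $w_\phi=0$, not boundedness of $A$ or $B$, this does not affect the validity of the proof.
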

\begin{proof}
Suppose first that $y_{\phi\phi}^{+}< C$. For some $\lambda>0$ to be chosen later and any $\al>0$, define
\begin{equation}
w=e^{-\lambda\tau}(y^{+}-y^{-})+\al
\end{equation}
Then $w>0$ on the parabolic boundary of our evolution. We will prove $w$ is also positive in the interior of our domain and the lemma will follow by letting $\al \searrow 0$. 

Assuming the contrary, there must be an interior point $\phi_0$ and a first time $\tau_0$ such that $w(\phi_0,\tau_0)=0$. Then $w_\tau(\phi_0,\tau_0)\leq0$, and at $(\phi_0,\tau_0)$:
$$y^{+}= y^{-} - \al e^{\lambda\tau_0},\qquad y^{+}_\phi= y^{-}_\phi,\qquad y^{+}_{\phi\phi} \geq y^{-}_{\phi\phi}.$$
Thus, at that point, we have $0\geq e^{\lambda\tau}w_\tau$ and
\begin{eqnarray*}
e^{\lambda\tau}w_\tau &=& y^{+}_\tau-y^{-}_\tau -\lambda(y^{+}-y^{-})\\
%&=&y_{\phi\phi}y  + (2-\phi-y_\phi)y_\phi + y\left( 1- \frac{y}{\phi^2}\right)  \\
%&& -z_{\phi\phi}z - (2-\phi-z_\phi)z_\phi - z\left( 1- \frac{z}{\phi^2}\right) \\
%&& -\lambda(y-z)\\
%&=& y_{\phi\phi}y-z_{\phi\phi}z  + y - \frac{y^2}{\phi^2} - z + \frac{z^ 2}{\phi^2} -\lambda (y-z)\\
&=& y^{-}(y^{+}_{\phi\phi}-y^{-}_{\phi\phi}) + (y^{-}-y^{+})\left[\lambda - y^{+}_{\phi\phi}-1 + \frac{y^{+}+y^{-}}{\phi^2}\right].
\end{eqnarray*}
But if we use the uniform bound $y^{+}_{\phi\phi}<C$, we have
$$0\geq w_\tau> \al e^{\lambda\tau}\left[\lambda - C-1 + \frac{y^{+}+y^{-}}{\phi^2}\right],$$
and, since $\frac{y^{+}+y^{-}}{\phi^2}\geq0$, this is a contradiction for any $\lambda>C+1$. The result then follows in the case $y^{+}_{\phi\phi}<C$.

To prove the lemma in the case that the subsolution $y^{-}$ satisfy the uniform bound $y^{-}_{\phi\phi}<C$, one uses the fact that at a first interior zero $(\phi_0,\tau_0)$ of $w$ one has:
$$e^{\lambda\tau}w_\tau = y^{+}(y^{+}_{\phi\phi}-y^{-}_{\phi\phi}) + (y^{-}-y^{+})\left[\lambda - y^{-}_{\phi\phi}-1 + \frac{y^{+}+y^{-}}{\phi^2}\right].$$
\end{proof}

\bibliographystyle{amsalpha}

\end{document}